\newtheorem{theorem}{Theorem}
\newtheorem{corollary}[theorem]{Corollary}
\newtheorem{definition}[theorem]{Definition}
\newtheorem{example}[theorem]{Example}
\newtheorem{lemma}[theorem]{Lemma}
\newtheorem{proposition}[theorem]{Proposition}
\newtheorem{remark}[theorem]{Remark}
\newenvironment{proof}[1][Proof]{\textbf{#1.} }{\ \rule{0.5em}{0.5em}}
\newcommand{\norm}[1]{\|#1\|}
\newcommand{\n}{\mathbb{N}}
\newcommand{\del}[3]{\delta_{{#1},{#2},{#3}}}
\newcommand{\f}[1]{#1\circ #1 \circ \ldots \circ #1}
\newcommand{\stack}[2]{\underset{#1
\,\,\text{times}}{\underbrace{#2}}}
\newcommand{\cart}{i_1,i_2,\ldots,i_{k-1}}
\begin{document}

\title{ 2-Homogeneous Polynomials and Maximal Subspaces}
\author{Carlos A. S. Soares - carlos.soares@ufjf.edu.br\\Universidade Federal de Juiz de Fora - Departamento de Matematica/ICE\\Cidade Universitaria - Juiz de Fora - MG - Brasil \\ 30036-330} \maketitle
\begin{abstract}
In this paper we study the maximal subspaces of continuous n-homoge-
neous polynomials on complex and real non separable Banach spaces. In the real case we will prove that if $P:X\rightarrow
\mathbb{R}$ is a 2-homogeneous polynomial and  if
there exist a k-dimensional $P$-maximal subspace then every
$P$-maximal subspace is k-dimensional.
\end{abstract}

\bigskip

\textit{Keywords: n-homogeneous polynomials, maximal subspaces and
zero-set}

\textit{AMS subject classification: 47H60}

\bigskip

The zero-set of a continuous homogeneous polynomial has been
studied by many authors as we can see in \cite{a1,p1,p2}. In
particular in \cite{p2} the authors show that for the complex
case, every continuous homogeneous polynomial of any degree is
identically zero on an infinite dimensional subspace.
In \cite{p1} and \cite{ca} conditions are given on a complex
Banach space $X$ such that every 2-homogeneous polynomial
$P:X\rightarrow\mathbb{C}$ is identically zero on a non separable
subspace. 

We recall that an n-homogeneous polynomial on a complex Banach
space $X$ is a mapping $P:X\rightarrow\mathbb{C}$ such that there
exists a symmetric n-linear mapping $\overset{\vee}{P}:X\times
X\times \ldots \times X\rightarrow \mathbb{C}$ with
$P(x)=\overset{\vee}{P}(x,x,\ldots,x)$ and $P$ will be said to be
continuous if there is a constant $k$ such that $P(x)\le k||x||^n$
for all $x$ in $X$. In this paper all n-homogeneous polynomial
considered will be continuous. The vector space of all continuous
n-homogeneous polynomials $P:X\rightarrow\mathbb{C}$ will be
denoted by $\mathcal{P}(^nX)$. We also observe that a polynomial
$P\in \mathcal{P}(^nX)$ will be said to be a nuclear polynomial if
there is a bounded sequence $(\varphi_i)\in X^{\ast}$ and
$(\lambda_i) \in l_1$ such that
$$P(x)=\sum_{i=1}^\infty \lambda_i \varphi_i(x)^n \;\; \mbox{for
every}\;\; x\in X.$$

We start with a simple observation, that if
$P:X\rightarrow\mathbb{C}$ is an n-homogeneous polynomial and
$H\subset X$ is a 2 dimensional subspace, then there is $z\in H$,
$z\ne 0$ such that $P(z)=0$. Indeed if $P(y)$ and $P(x)$ are not
zero with $x$ and $y$ in $H$ independent, then
$$p(\alpha)=P(x+\alpha
y)=\sum_{j=0}^{n}\binom{n}{j}\alpha^{n-j}\overset{\vee}{P}(x^{j},y^{n-j})$$
is a complex polynomial and taking $\alpha_1$ one of its roots we
have $P(x+\alpha_1 y)=0$.

\begin{definition} Let X be a Banach space and
$P:X\rightarrow\mathbb{K}$ a non zero n-homogeneous polynomial . A
subspace $M\subsetneqq X$ will be called P-maximal if $P|_{M}=0$
and if whenever $M_{1}$ is a subspace such that $P|_{M_1}=0$ with
 $M\subset M_{1}\subset X,$ then $M_{1}=M.$

\end{definition}

\bigskip
If $P:X\rightarrow \mathbb{K}$ is a non zero n-homogeneous
polynomial and $M$ is a subspace such that $P|_M=0$ it is simple
 to get a P-maximal subspace containing $M$. For an infinite
dimensional complex space we can prove a bit more.

\begin{theorem}\label{carlos}
Let X be an infinite dimensional complex Banach space and
\\ $P:X\rightarrow \mathbb{C}$ be a non zero 2-homogeneous polynomial. If $M \subset X$ is a subspace with $P|_M=0$, then there is an
infinite dimensional $P$-maximal subspace $H$ such that $M \subset
H.$

\end{theorem}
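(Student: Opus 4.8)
The plan is to obtain the subspace $H$ in two moves: first produce, by a routine Zorn's lemma argument, \emph{some} $P$-maximal subspace $H$ containing $M$, and then prove separately that in an infinite dimensional complex Banach space \emph{every} $P$-maximal subspace is automatically infinite dimensional. Combining the two gives the statement, and in fact a little more (the second move shows finite dimensional $P$-maximal subspaces simply do not exist in this setting).

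For the first move, consider the family $\mathcal{F}$ of all subspaces $N$ with $M\subseteq N\subsetneqq X$ and $P|_N=0$, ordered by inclusion. It is nonempty since $M\in\mathcal{F}$. Given a chain in $\mathcal{F}$, its union is again a subspace on which $P$ vanishes (any two of its vectors lie in a common member of the chain), and it is proper because $P\not\equiv 0$ forbids $X$ itself from being a zero-subspace; hence the union lies in $\mathcal{F}$. By Zorn's lemma $\mathcal{F}$ has a maximal element $H$, and by construction $H$ is $P$-maximal and contains $M$.

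The heart of the argument is the second move. I will work with the symmetric bilinear form $\overset{\vee}{P}$ satisfying $P(x)=\overset{\vee}{P}(x,x)$; by polarization, $P|_H=0$ is equivalent to $\overset{\vee}{P}$ vanishing identically on $H\times H$. Suppose, for contradiction, that $\dim H=k<\infty$, with basis $e_1,\dots,e_k$. The $k$ linear functionals $x\mapsto\overset{\vee}{P}(e_i,x)$ have common kernel $V=\bigcap_{i=1}^k\ker\overset{\vee}{P}(e_i,\cdot)$, which has codimension at most $k$ in $X$ and is therefore infinite dimensional; picking a complement $V'$ of the finite dimensional space $V\cap H$ inside $V$ and choosing a $2$-dimensional subspace $W\subseteq V'$, we get $W\subseteq V$ with $W\cap H=\{0\}$. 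By the elementary observation recorded in the introduction, there is $v\in W$, $v\ne 0$, with $P(v)=0$; then $v\notin H$, and for every $h\in H$ and $\alpha\in\mathbb{C}$,
$$P(h+\alpha v)=P(h)+2\alpha\,\overset{\vee}{P}(h,v)+\alpha^2 P(v)=0,$$
since $P(h)=0$, $P(v)=0$, and $\overset{\vee}{P}(h,v)=0$ because $v\in V$ annihilates each $\overset{\vee}{P}(e_i,\cdot)$. Hence $H\oplus\mathbb{C}v$ is a zero-subspace strictly larger than $H$, contradicting the $P$-maximality of $H$. Therefore $H$ is infinite dimensional, which completes the proof.

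I do not anticipate a genuine obstacle here; the only points requiring care are the equivalence $P|_H=0\iff\overset{\vee}{P}|_{H\times H}=0$, the bookkeeping that guarantees the newly produced vector $v$ really lies outside $H$ (arranged by forcing $W\cap H=\{0\}$), and the observation that the union of a chain of proper zero-subspaces remains proper, which uses $P\not\equiv 0$.
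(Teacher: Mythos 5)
Your proposal is correct and follows essentially the same route as the paper: Zorn's lemma to get a $P$-maximal $H\supseteq M$, then, assuming $\dim H=k<\infty$, intersecting the kernels of the functionals $\overset{\vee}{P}(e_i,\cdot)$ to find a two-dimensional subspace meeting $H$ trivially, locating a nonzero zero $v$ of $P$ there via the introductory observation, and contradicting maximality with $H\oplus[v]$. Your handling of the chain condition in Zorn's lemma and of the transversality $W\cap H=\{0\}$ is just a more explicit version of what the paper does.
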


\begin{proof}Let $\mathcal{S}=\{N: N$ is a subspace of $X$ and $M \subset N$ and
$P|_N=0\}.$ Partially ordering $S$ by inclusion and using Zorn`s
Lemma we obtain a maximal element $H.$ It is clear that $H\ne
\{0\}$ and that $H$ is a $P$-maximal subspace. Suppose that $H$ is
finite dimensional and let $H=[e_1,\ldots ,e_n].$ For each
$i=\{1,2,\ldots ,n\}$, take $\varphi_{i}\in X^{\ast}$ given by
$\varphi_{i}(x)=\overset{\vee}{P}(e_{i},x).$ So
$\mathcal{K}=\cap_{i=1}^{n}\varphi_{i}^{-1}(0)$ is an infinite
dimensional subspace and $M\subset \mathcal{K}$. Now we choose
linearly independent vectors $y$ and $z$ in $\mathcal{K}$ such
that $\mathcal{M}=[e_1,e_2, \ldots ,e_n,y,z]$ is an
(n+2)-dimensional subspace. If $P(y)=0$ or $P(z)=0$ we have
$P|_{[e_1,e_2,\ldots ,e_n,y]}=0$ or $P|_{[e_1,e_2,\ldots
,e_n,z]}=0$ respectively, because, for example, $P(\sum_{i=1}^n
t_ie_i+t_{n+1}y)=P(\sum_{i=1}^n
t_ie_i)+2\overset{\vee}{P}(\sum_{i=1}^n
t_ie_i,t_{n+1}y)+P(t_{n+1}y)=2\sum_{i=1}^n
t_it_{n+1}\overset{\vee}{P}(e_i,y)=0$ for every $t_1,t_2,\ldots
,t_n,t_{n+1} \in \mathbb{C}.$ That is a contradiction with the
maximality of $H$. If $P(y)$ and $P(z)$ are non zero we can obtain
$\alpha \in \mathbb{C}$ such that if $w=y+\alpha z$ we have
$P(w)=0$ and consequently in that case $P|_{[e_1,e_2,\ldots
,e_n,w]}=0$, and it is again a contradiction with the maximality
of $H$.
\end{proof}

\bigskip

Note that the proof above shows that every P-maximal subspace is
an infinite dimensional subspace. In the real case we are going to
prove that if a 2-homogeneous polynomial has a finite dimensional
P-maximal subspace, then every P-maximal subspace is finite
dimensional.

\begin{corollary}
Let $X$ be either complex $c_0$ or $l_p$. If $P:X
\rightarrow\mathbb{C}$ is a 2-homogeneous polynomial, then every
P-maximal subspace contains an isomorphic subspace that is
complemented in $X$.
\end{corollary}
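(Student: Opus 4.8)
The plan is to combine Theorem \ref{carlos} with the structural fact that in $c_0$ and $\ell_p$ every closed infinite-dimensional subspace contains a further subspace isomorphic to the ambient space that is complemented in it. First I would recall the classical theorem of Pełczyński: if $X = c_0$ or $X = \ell_p$ ($1 \le p < \infty$), then every closed infinite-dimensional subspace $Y \subset X$ contains a subspace $Z$ which is isomorphic to $X$ and complemented in $X$ (one extracts from $Y$ a normalized sequence that is a small perturbation of a block basic sequence of the unit vector basis; block basic sequences in $c_0$ and $\ell_p$ span complemented copies of $X$, and the small-perturbation principle preserves both the isomorphism type and complementation). Granting this, the corollary is almost immediate.

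So the argument runs as follows. Let $P : X \to \mathbb{C}$ be a $2$-homogeneous polynomial and let $H$ be a $P$-maximal subspace. By the remark following Theorem \ref{carlos}, $H$ is infinite dimensional. One subtlety is that the definition of $P$-maximal subspace does not a priori require $H$ to be closed; but if $P|_H = 0$ then by continuity of $P$ we have $P|_{\overline{H}} = 0$ as well, so maximality forces $H = \overline{H}$, i.e. $H$ is closed. Now apply Pełczyński's theorem to the closed infinite-dimensional subspace $H$: there is $Z \subset H$ with $Z$ isomorphic to $X$ and $Z$ complemented in $X$. This $Z$ is the desired subspace, and we are done.

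The only real content beyond citing the structure theorem is the closedness observation, which is where I would be most careful: one must check that ``$P$-maximal'' in the sense of the paper's Definition does yield a closed subspace, so that the hypotheses of Pełczyński's theorem (which is stated for closed subspaces) are met. Everything else is bookkeeping. I would therefore organize the write-up as: (i) note $H$ closed and infinite dimensional; (ii) quote the Pełczyński subspace theorem for $c_0$ and $\ell_p$; (iii) conclude. The main obstacle, such as it is, is purely one of citing the right form of the Banach-space structural result rather than anything genuinely polynomial-theoretic; the polynomial input has already been fully absorbed into Theorem \ref{carlos}.
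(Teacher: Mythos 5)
Your argument is essentially identical to the paper's: the paper simply cites the well-known fact (Pe\l czy\'nski's theorem) that every infinite-dimensional subspace of $c_0$ or $l_p$ contains a copy of the whole space complemented in it, combined with the observation after Theorem \ref{carlos} that every $P$-maximal subspace is infinite dimensional. Your additional remark that a $P$-maximal subspace is automatically closed (since $P|_{\overline{H}}=0$ by continuity) is a correct and worthwhile detail that the paper leaves implicit.
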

\begin{proof} It is well known that every infinite dimensional
subspace of $X$ contains an isomorphic subspace that is
complemented in $X$.
\end{proof}

\begin{theorem} Let $X$ be a real Banach space and $P:X \rightarrow\mathbb{R}$
be a 2-homogeneous polynomial, with $k$-dimensional P-maximal
subspace $M$. If $H\subset X$ is an $m$-dimensional subspace such
that $P|_H=0$ then $m\le k$.
\end{theorem}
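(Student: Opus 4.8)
The goal is to show: if $P$ has a $k$-dimensional $P$-maximal subspace $M$, then no subspace on which $P$ vanishes can have dimension exceeding $k$. I would argue by contradiction: suppose $H$ is a subspace with $P|_H = 0$ and $\dim H = m > k$. The strategy is to use the bilinear form $\overset{\vee}{P}$ to build, out of $M$ and (part of) $H$, a subspace strictly larger than $M$ on which $P$ still vanishes, contradicting $P$-maximality of $M$.

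**Key steps.**

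First, write $M = [e_1,\dots,e_k]$ and define the linear functionals $\varphi_i(x) = \overset{\vee}{P}(e_i, x)$ for $i = 1,\dots,k$, exactly as in the proof of Theorem~\ref{carlos}. The subspace $\mathcal{K} = \bigcap_{i=1}^k \varphi_i^{-1}(0)$ has codimension at most $k$ in $X$. Since $\dim H = m > k$, the intersection $H \cap \mathcal{K}$ is nonzero; pick $0 \neq v \in H \cap \mathcal{K}$. Because $v \in H$ and $P|_H = 0$ we have $P(v) = 0$, and also $P(tv + sw) = t^2 P(v) + 2ts\,\overset{\vee}{P}(v,w) + s^2 P(w)$ for any $w$. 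Now I claim $P$ vanishes on $M \oplus [v]$: for $x = \sum t_i e_i \in M$ and scalar $s$, expanding $P(x + sv)$ gives $P(x) + 2s\sum t_i \overset{\vee}{P}(e_i, v) + s^2 P(v) = 0 + 2s\sum t_i \varphi_i(v) + 0 = 0$, since $v \in \mathcal{K}$ kills every $\varphi_i$. One must check $v \notin M$: if $v \in M$ then $P|_M = 0$ already forces nothing bad, but we need $M \oplus [v] \supsetneq M$ — this holds because $v$ can be chosen outside $M$. Indeed $\dim(H \cap \mathcal{K}) \ge m - k \ge 1$; if $m - k \ge 1$ and $H \cap \mathcal{K} \subset M$ we would need to be slightly more careful, so instead choose $v \in (H \cap \mathcal{K}) \setminus M$, which is possible provided $H \cap \mathcal{K} \not\subset M$. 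To guarantee this, note that if $H \cap \mathcal{K} \subset M$ then $\dim(H \cap \mathcal{K}) \le \dim(H \cap M)$; combining $\dim(H\cap\mathcal{K}) \ge \dim H - k$ with $\dim(H \cap M) \le \dim M = k$ and a dimension count should yield a contradiction or else directly produce the needed vector.

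**The main obstacle.**

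The delicate point is ensuring that the new vector $v$ genuinely enlarges $M$ — i.e. that one can select $v \in H \cap \mathcal{K}$ with $v \notin M$. A clean way around this: replace $M$ by $M + H$ piece by piece. If $H \subset M$ we are done since then $m \le k$ trivially. Otherwise pick any $v \in H \setminus M$; the issue is only that such $v$ need not lie in $\mathcal{K}$. So the real content is the dimension inequality $\dim(H \cap \mathcal{K}) \geq m - k$ together with $\dim(H \cap M) \le k$: if $m > k$ then $\dim(H\cap\mathcal{K}) \ge m-k$, and one checks that $H \cap \mathcal{K}$ cannot be entirely contained in $M$ unless $m \le k$, because $H\cap\mathcal K\subseteq M$ would give $\dim(H\cap\mathcal K)\le\dim(H\cap M)$ while simultaneously $H\cap\mathcal K$ has large dimension inside $H$. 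Once $v \in (H\cap\mathcal{K})\setminus M$ is secured, the computation above shows $P$ vanishes on $M \oplus [v]$, a subspace strictly containing $M$, contradicting maximality. I expect essentially no use of the real-field hypothesis here — that hypothesis is presumably needed in the companion results about finite-dimensionality, not in this one-sided bound.
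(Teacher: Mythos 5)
Your overall strategy is the same as the paper's (use the functionals $x\mapsto \overset{\vee}{P}(e_i,x)$ to find a vector of $H$ that is $\overset{\vee}{P}$-orthogonal to all of $M$, then adjoin it to $M$ and contradict maximality), but the step you yourself flag as delicate is a genuine gap, and your proposed dimension count does not close it. From $\dim(H\cap\mathcal{K})\ge m-k$ and $\dim(H\cap M)\le k$, the assumption $H\cap\mathcal{K}\subset M$ only yields $m-k\le k$, i.e.\ $m\le 2k$. That is not a contradiction when $k<m\le 2k$, so your argument as written proves only $m\le 2k$, not $m\le k$.

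The missing observation --- which is exactly the point of the paper's proof --- is that the functionals attached to basis vectors of $M$ lying in $H$ are identically zero on $H$: if $u\in M\cap H$ and $x\in H$, then $\overset{\vee}{P}(u,x)=\tfrac12\bigl(P(u+x)-P(u)-P(x)\bigr)=0$ because $P|_H=0$. So if you choose the basis $e_1,\dots,e_k$ of $M$ so that $e_1,\dots,e_j$ span $J=M\cap H$, only the $k-j$ functionals $\varphi_{j+1}|_H,\dots,\varphi_k|_H$ can be nonzero, whence $\dim(H\cap\mathcal{K})\ge m-(k-j)$. Since $M\subset\mathcal{K}$ (again by polarization from $P|_M=0$), one has $H\cap M\subset H\cap\mathcal{K}$, and $\dim(H\cap M)=j<m-(k-j)$ as soon as $m>k$; hence $H\cap\mathcal{K}$ strictly contains $H\cap M$ and the desired $v\in(H\cap\mathcal{K})\setminus M$ exists. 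The paper implements this same idea slightly differently --- it passes to a $(k+1)$-dimensional $N\subset H$, splits $N=J\oplus I$ with $J=M\cap N$, and applies the $k-j$ functionals only to the $(k-j+1)$-dimensional space $I$, so the nonzero kernel vector lies in $I$ and is automatically outside $M$ --- but the content is identical. Your closing remark is correct: nothing in this argument uses the real scalars; the hypothesis matters only because over $\mathbb{C}$ a finite-dimensional $P$-maximal subspace cannot exist (Theorem 2).
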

\begin{proof} Suppose that there is a $(k+1)$-dimensional subspace $N\subset H$. Let $\{w_1,w_2,\ldots ,w_j\}$ be a basis of $J=M\cap N$ and $\{w_1,w_2,\ldots ,w_j,w_{j+1},\ldots ,w_k\}$ be a basis
 of $M$. Since $J\subset N$, there is a $(k-j+1)$-dimensional
 subspace $I\subset N$ such that
 \begin{equation}\label{eq10}
        N=J\oplus I.
\end{equation}
Now, for each $i=\{1,2,\ldots ,k-j\}$, take $\varphi_{i}:I
\rightarrow\mathbb{R}$ given by
\begin{equation}\label{eq20}
       \varphi_{i}(x)=\overset{\vee}{P}(w_{j+i},x)
\end{equation}
 Since $dim(I)=k-j+1$ we have
$dim(\cap_{i=1}^{k-j}\varphi_{i}^{-1}(0))\geq 1$. Thus we can take
$w\ne 0\in \cap_{i=1}^{k-j}\varphi_{i}^{-1}(0)$ and we observe
that $w\notin M.$ Indeed, if $w\in M$ we have $w\in J$ and
consequently $w\in J\cap I$, a contradiction by [\ref{eq10}]. Now
note that by [\ref{eq20}] $\overset{\vee}{P}(w_{j+i},w)=0$ for
every $i=1,2,\ldots ,k-j$. Since $w\in N$ and $P|_N=0$ we also
have $\overset{\vee}{P}(w_{i},w)=0$ for every $i=1,2,\ldots ,j$.
Therefore $P|_{M\oplus [w]}=0$ but that is a contradiction to the
maximality of $M$.
\end{proof}

\begin{corollary}Let $X$ be a real Banach space and $P:X \rightarrow\mathbb{R}$
be a 2-homogeneous polynomial. If $P$ is identically zero on an
infinite dimensional subspace, then every P-maximal subspace is
infinite dimensional.
\end{corollary}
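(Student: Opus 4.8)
The plan is to obtain this as an immediate contrapositive of the theorem just proved. Suppose, for contradiction, that $X$ admits a finite-dimensional $P$-maximal subspace $M$, and set $k=\dim M$. By hypothesis there is an infinite-dimensional subspace $W\subset X$ with $P|_W=0$. Since $W$ is infinite dimensional it contains a $(k+1)$-dimensional subspace $H$, and of course $P|_H=0$ because $H\subset W$. Now apply the preceding theorem to this $M$ and this $H$: it forces $k+1\le k$, which is absurd.

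Therefore $X$ has no finite-dimensional $P$-maximal subspace, i.e.\ every $P$-maximal subspace of $X$ is infinite dimensional. One may add, for completeness, that the statement is not vacuous: as observed just after the definition, Zorn's Lemma applied to the family of subspaces containing $W$ on which $P$ vanishes produces a $P$-maximal subspace $\supset W$, which is then necessarily infinite dimensional.

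I do not expect any genuine obstacle here, since all the real work is in the preceding theorem; the corollary is a one-line consequence once one uses the elementary fact that an infinite-dimensional vector space contains subspaces of every finite dimension. The only points that merit a word of care are that $P$ is tacitly assumed nonzero (otherwise the notion of $P$-maximal subspace is empty of meaning), and that the theorem is being used in exactly the form it is stated: the existence of one $k$-dimensional $P$-maximal subspace caps the dimension of every subspace on which $P$ vanishes by $k$.
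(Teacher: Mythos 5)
Your proof is correct and is exactly the intended argument: the paper states this corollary without proof as an immediate consequence of the preceding theorem, and your contrapositive reading (a $k$-dimensional $P$-maximal subspace would cap every zero-subspace at dimension $k$, contradicting the existence of a $(k+1)$-dimensional one inside $W$) is the one-line deduction the author had in mind.
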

\begin{example}
For each natural number k let $P_k:l_2 \rightarrow\mathbb{R}$ be
given by
\begin{equation}\label{eq50}
       P_k(x(i))=-\sum_{i=1}^k x(i)^2+\sum_{i=k+1}^\infty x(i)^2
\end{equation}
For every $j=1,2,\ldots,k$ let $x_j\in l_2$ be given by

      $$ x_j(i)=\left\{
       \begin{array}{ll}
         1 & if \,\,\, i=j \,\,\, or\,\,\, i=j+k, \\
         0 & otherwise. \\
       \end{array}
       \right.$$

It is clear that $M_k=[x_1,x_2,\ldots,x_k]$ is a k-dimensional
subspace and $P|_{M_k}=0$. We claim that $M_k$ is a $P_k$-maximal
subspace. Suppose $y=y(i)\in l_2$ such that $P_k|_{M_k+[ y]}=0$.
Note that
\begin{equation}\label{eq60}
       \overset{\vee}{P_k}(x_j,y)=-\sum_{i=1}^k x_j(i)y(i)+\sum_{i=k+1}^\infty x_j(i)y(i)=-y(j)+y(j+k)
\end{equation}
Since $\overset{\vee}{P_k}(x_j,y)=0$ we obtain $y(i)=y(i+k)$ for
every $i=1,2,\ldots,k.$ We also have
\begin{equation}\label{eq70}
        P_k(y(i))=-\sum_{i=1}^k y(i)^2+\sum_{i=k+1}^{2k} y(i)^2+\sum_{i=2k+1}^\infty
        y(i)^2=\sum_{i=2k+1}^\infty
        y(i)^2
\end{equation}
Since $P_k(y)=0$ we obtain $y(i)=0$ for every $i>2k$ and so
$y=\sum_{j=1}^k y(j)x_j \in M_k$.

As a consequence of the last theorem we get that every
$P_k$-maximal subspace is a $k$-dimensional subspace.
\end{example}

\begin{remark}
It is easy to see that the Theorem 2 does not hold in the real
case when $n$ is even. In the general case, when $n$ is odd, the
best result is shown in \cite{a2}, that is, if $X$ is an real
infinite dimensional Banach space and $P:X \rightarrow\mathbb{R}$
is an n-homogeneous polynomial, then for each natural number $k$
there exist a $k$-dimensional subspace $M_k$ such that
$P|_{M_k}=0.$ Nevertheless it is shown in \cite{a3} that in every
real separable Banach space X there exists an n-homogeneous
polynomial $P$ ($n>1$ arbitrary odd number) such that $P$ is not
identically zero on $M$ if $M\subset X$ is an infinite dimensional
subspace.

\end{remark}
\bigskip

We now provide an example to show that for $n$-homogeneous
polynomials $(n\geq 3)$ the previous theorem does not hold.

\begin{example}
Take $X=l_2$ the standard real space. For each natural number $k$
we define the n-homogeneous polynomial $P_k:l_2
\rightarrow\mathbb{R}$ given by
\begin{equation}\label{eq100}
       P_k(x)=x(1)^{n-2}\sum_{i=k+1}^\infty x(i)^2.
\end{equation}

It is clear that $M=\{ x\in l_2;\,x(1)=0\}$ is a 1-codimensional
$P_k$-maximal subspace. We will prove that
\begin{equation}\label{eq200}
       M_k=\{x\in l_2; \,x(i)=0\,\,\mbox{if}\,\,i>k\}
\end{equation}
is a $P_k$-maximal subspace and consequently for each $P_k$ there
are only two $P_k$-maximal subspaces, one 1-codimensional and the
other k-dimensional. Suppose $y\in l_2$ such that $P_k|_{M_k\oplus
[y]}=0.$ Since $P_k(y)=0$ and $y\notin M_k$ we have $y(1)=0.$ Now,
let $x\in l_2$ given by $x(1)=1$ and $x(i)=0$ if $i>1.$ Thus,
since $x\in M_k$ we have $P_k(x+y)=0$, that is, $(x+y)(1)=0$ or
$\sum_{i=k+1}^\infty (x+y)(i)^2=0.$ As $(x+y)(1)=1$ we must have
$\sum_{i=k+1}^\infty (x+y)(i)^2$ and so $y(i)=0$ for every $i>k$
and that implies $y\in M_k$, a contradiction.
\end{example}

It is important to observe that in the proof of theorem, we know
the size of $\cap _{i=1}^{k}\varphi_{i}^{-1}(0)$. This motivates
the following definition ( cf \cite{p1} ).

\begin{definition} A Banach space $X$ is said to have the nontrivial
intersection property ( $X$ has the NIP ) if for every countable
family $\{\varphi_i\}\in X^{\ast}$ we have $\cap
_{i=1}^{\infty}\varphi_{i}^{-1}(0)\ne \{0\}.$
\end{definition}
\begin{remark} It is simple, by the Hahn-Banach theorem to show that
if $X$ is a separable space then there exists a countable family
$\{\varphi_i\}\in X^{\ast}$ such that $\cap
_{i=1}^{\infty}\varphi_{i}^{-1}(0)=\{0\}$ and consequently if $X$
has the NIP we have $X$ is a non separable space.
\end{remark}
\begin{proposition} Let $X$ be a non separable Banach space. Then
$X$ has the NIP if and only if for every countable family
$\{\varphi_i\} \textcolor{red}{\subset} X^{\ast}$, we have $\cap
_{i=1}^{\infty}\varphi_{i}^{-1}(0)$ is a non separable subspace.
\end{proposition}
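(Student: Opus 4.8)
The plan is to prove both directions, with the nontrivial direction being that the NIP forces every countable kernel intersection to be non separable. For the easy direction, suppose every countable family $\{\varphi_i\}\subset X^*$ has $\cap_{i=1}^\infty \varphi_i^{-1}(0)$ non separable; in particular it is nonzero, so $X$ has the NIP. For the converse, suppose $X$ has the NIP and let $\{\varphi_i\}_{i\in\mathbb{N}}\subset X^*$ be given. Set $Y=\cap_{i=1}^\infty \varphi_i^{-1}(0)$, a closed subspace. The strategy is to argue by contradiction: assume $Y$ is separable, and then manufacture from $\{\varphi_i\}$ together with countably many extra functionals a new countable family whose kernel intersection is $\{0\}$, contradicting the NIP.

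First I would deal with $Y$: if $Y$ is separable, pick a countable dense set $\{y_n\}$ in $Y$ and, by Hahn--Banach, a countable family $\{\psi_m\}\subset X^*$ such that $\cap_m \psi_m^{-1}(0)\cap Y = \{0\}$ — this is exactly the statement (quoted in the Remark preceding the proposition) that a separable space admits a countable separating family of functionals, applied to the separable space $Y$ and then extending each functional from $Y$ to all of $X$ by Hahn--Banach. The key point is then that
\[
\Bigl(\bigcap_{i=1}^\infty \varphi_i^{-1}(0)\Bigr)\cap\Bigl(\bigcap_{m=1}^\infty \psi_m^{-1}(0)\Bigr) = Y\cap\Bigl(\bigcap_{m=1}^\infty \psi_m^{-1}(0)\Bigr)=\{0\}.
\]
But $\{\varphi_i\}_{i}\cup\{\psi_m\}_m$ is a countable subfamily of $X^*$, so this contradicts the NIP. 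Hence $Y$ cannot be separable, which is what we wanted.

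The main obstacle here is a subtlety in the quoted fact: I must be sure that a separable Banach space $Z$ admits a \emph{countable} family $\{\psi_m\}\subset Z^*$ with $\cap_m\psi_m^{-1}(0)=\{0\}$. This follows because, taking $\{z_n\}$ dense in $Z$ and for each $n$ choosing via Hahn--Banach some $\psi_n\in Z^*$ with $\psi_n(z_n)=\|z_n\|$ and $\|\psi_n\|=1$, any $z$ with $\psi_n(z)=0$ for all $n$ satisfies $\|z_n\|=|\psi_n(z_n-z)|\le\|z_n-z\|$ for all $n$; choosing $z_n\to z$ gives $\|z\|\le 0$, so $z=0$. Then I extend each $\psi_n$ from $Y$ to $X$ with the same name. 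One should note that the original family $\{\varphi_i\}$ might be finite or the $\varphi_i$ might fail to separate points of $X/Y$, but that is irrelevant: we only use that $\cap_i\varphi_i^{-1}(0)=Y$ by definition of $Y$, and we only ever enlarge the family. So the argument is really just: NIP $\Rightarrow$ (kernel intersection separable would let us refine to a trivial kernel intersection) $\Rightarrow$ kernel intersection non separable; and conversely non separable kernel intersections are in particular nonzero, giving NIP. This completes the proof.
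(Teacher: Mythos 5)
Your proof is correct and is essentially the argument the paper has in mind: the paper's one-line proof ("a simple consequence of the previous remark") is exactly your contradiction argument, namely that if the kernel intersection $Y$ were separable one could adjoin to $\{\varphi_i\}$ a countable Hahn--Banach family separating the points of $Y$, producing a countable family with trivial kernel intersection and contradicting the NIP. You have merely written out the details the paper omits.
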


\begin{proof} It is a simple consequence of the previous remark.
\end{proof}

\begin{proposition} Let $X$ be a non separable real Banach space. The
following assertions are equivalent:

(i) $X$ has the NIP

(ii) For every nuclear polynomial $P\in \mathcal{P}(^2X)$ there is
an $x_P \ne 0$ in $X$ such that $P(x_P)= 0$

(iii) For every nuclear polynomial $P\in \mathcal{P}(^2X)$ there
is a non separable subspace $M\subset X$ such that $P|_M=0$.

(iv) If $P\in \mathcal{P}(^2X)$ is a nuclear polynomial then every
P-maximal subspace is an infinite dimensional subspace.
\end{proposition}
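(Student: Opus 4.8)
The plan is to prove the chain of implications $(i)\Rightarrow(iii)\Rightarrow(iv)\Rightarrow(ii)\Rightarrow(i)$; the two middle arrows are essentially bookkeeping, while $(i)\Rightarrow(iii)$ carries the analytic content. For $(i)\Rightarrow(iii)$, write a given nuclear polynomial as $P(x)=\sum_{i=1}^\infty \lambda_i\varphi_i(x)^2$ with $(\varphi_i)$ bounded in $X^\ast$ and $(\lambda_i)\in\ell_1$. The NIP (in the strengthened form of the Proposition just above: $\bigcap_{i=1}^\infty\varphi_i^{-1}(0)$ is actually non separable) applied to this very countable family $\{\varphi_i\}$ produces a non separable subspace $M=\bigcap_i\varphi_i^{-1}(0)$, and by construction $P|_M=0$ since every term $\lambda_i\varphi_i(x)^2$ vanishes on $M$. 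That gives (iii) immediately.

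For $(iii)\Rightarrow(iv)$: let $P$ be nuclear and let $N$ be any $P$-maximal subspace. By (iii) there is a non separable subspace $M$ with $P|_M=0$; I would like to conclude $N$ is infinite dimensional. If $N$ were finite dimensional, say $N=[e_1,\ldots,e_n]$, then mimic the argument from Theorem~\ref{carlos}: set $\psi_i(x)=\overset{\vee}{P}(e_i,x)$ for $i=1,\ldots,n$. These are finitely many functionals, so $\mathcal K=\bigcap_{i=1}^n\psi_i^{-1}(0)$ has finite codimension and hence, since $M$ is non separable (infinite dimensional in particular), $\mathcal K\cap M$ is still infinite dimensional — in particular nonzero, and every $w\in M$ in $\mathcal K$ satisfies $\overset{\vee}{P}(e_i,w)=0$ for all $i$ and $P(w)=0$, whence $P|_{N\oplus[w]}=0$, contradicting maximality. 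The one technical point: we need $\mathcal K\cap M\not\subset N$, which holds because $\mathcal K\cap M$ is infinite dimensional while $N$ is finite dimensional; pick $w\in(\mathcal K\cap M)\setminus N$.

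For $(iv)\Rightarrow(ii)$: given a nuclear $P$, take any $P$-maximal subspace $N$ (one exists by the Zorn's Lemma argument quoted in the text, applied to $M=\{0\}$, provided $P$ vanishes somewhere nonzero — but in fact we should start from $M=\{0\}$ and note $\mathcal S$ is nonempty since $\{0\}\in\mathcal S$). By (iv), $N$ is infinite dimensional, in particular $N\neq\{0\}$, so any nonzero $x_P\in N$ works: $P(x_P)=0$. Finally $(ii)\Rightarrow(i)$: given a countable family $\{\varphi_i\}\subset X^\ast$, I would form the nuclear polynomial $P(x)=\sum_{i=1}^\infty 2^{-i}\bigl(1+\|\varphi_i\|\bigr)^{-2}\varphi_i(x)^2$ — the coefficients lie in $\ell_1$ and the normalized functionals $\varphi_i/(1+\|\varphi_i\|)$ are bounded, so $P$ is nuclear. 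By (ii) there is $x_P\neq 0$ with $P(x_P)=0$; since $P$ is a convergent sum of nonnegative terms, each $\varphi_i(x_P)^2=0$, so $x_P\in\bigcap_{i=1}^\infty\varphi_i^{-1}(0)\setminus\{0\}$, which is the NIP. The main obstacle is really conceptual bookkeeping in $(iii)\Rightarrow(iv)$ — making sure the finite-codimension intersection argument interacts correctly with the non separable witness subspace — rather than any hard estimate; everything else is routine.
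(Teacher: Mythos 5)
Your proof is correct, but it traverses the equivalences in a different order than the paper ((i)$\Rightarrow$(iii)$\Rightarrow$(iv)$\Rightarrow$(ii)$\Rightarrow$(i) versus the paper's (i)$\Rightarrow$(ii)$\Rightarrow$(iii)$\Rightarrow$(iv)$\Rightarrow$(i)) and fills two steps differently. For (i)$\Rightarrow$(iii) you apply the strengthened form of the NIP (the preceding Proposition) directly to the functionals in the nuclear representation, which collapses the paper's two-step detour through (ii) and its auxiliary polynomial $Q$ into one line. For (iii)$\Rightarrow$(iv) the paper simply cites Theorem 4 (if a $k$-dimensional $P$-maximal subspace exists, every zero subspace has dimension at most $k$), whereas you re-run the underlying argument from scratch: the functionals $\psi_i(x)=\overset{\vee}{P}(e_i,x)$ cut out a finite-codimensional subspace $\mathcal K$, so $\mathcal K\cap M$ is infinite dimensional and supplies a vector $w\notin N$ with $P|_{N\oplus[w]}=0$; this is the same idea as the paper's Theorem 4 but self-contained, at the cost of some duplication. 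Your (ii)$\Rightarrow$(i) uses the same device as the paper's (iv)$\Rightarrow$(i) — build a nuclear polynomial from the countable family and exploit that, over $\mathbb R$, a vanishing sum of nonnegative terms forces each term to vanish — and your normalization $2^{-i}(1+\|\varphi_i\|)^{-2}$ is in fact cleaner than the paper's $1/(i^2\|\varphi_i\|)$, since it keeps the functionals bounded and the coefficients summable even when some $\varphi_i=0$. Your attention to the existence of a $P$-maximal subspace via Zorn starting from $\{0\}$ in (iv)$\Rightarrow$(ii) is a legitimate point the paper glosses over. Both routes rest on the same two pillars (the non-separable-intersection reformulation of the NIP and the finite-dimensional-maximal-subspace obstruction), so the difference is organizational rather than conceptual.
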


\begin{proof}

(i)$\Rightarrow$(ii) Let $P(x)=\sum_{i=1}^\infty \lambda_i
\varphi_i(x)^2$ with $(\lambda_i) \in l_1$ and $(\varphi_i)\in
X^\ast$ a bounded sequence. Since $X$ has the NIP, we have
$\cap_{i=1}^\infty \varphi_i^{-1}(0) \ne \{0\}$; that is, there is
$x_P\ne 0$ such that $\varphi_i(x_P)=0$ for every $i=1,2,\ldots$
and so $P(x_P)=0$.

(ii)$\Rightarrow$(iii) If $P(x)=\sum_{i=1}^\infty \lambda_i
\varphi_i(x)^2$ is a nuclear polynomial, let $Q(x)\in
\mathcal{P}(^2X)$ be given by $Q(x)=\sum_{i=1}^\infty
\frac{\varphi_i(x)^2}{i^2\|\varphi_i \|}$.  Since $Q$ is a nuclear
polynomial there is $x_Q \in X\backslash \{0\}$ such that
$Q(x_Q)=0$; that is, $x_Q\in \cap_{i=1}^\infty \varphi_i^{-1}(0)$
and by the previous proposition we have $\cap_{i=1}^\infty
\varphi_i^{-1}(0)$ is a non separable subspace and the result
follows.

 (iii)$\Rightarrow$(iv) It is enough to note that if  $P\in
 \mathcal{P}(^2X)$is a nuclear polynomial and $M$ is a  finite
 dimensional subspace, by the Theorem [4] every $P$-maximal
 subspace is finite dimensional.

 (iv)$\Rightarrow$(i)
 Let $\{\varphi_i\}$ a countable
family in $X^\ast$ and define $P:X\rightarrow \mathbb{R}$ by
$$P(x)=\sum_{i=1}^\infty \frac{\varphi_i(x)^2}{i^2\|\varphi_i
\|}.$$ Thus $P$ is a nuclear polynomial and there exists an
infinite dimensional subspace $M$ such that $P|_M=0$ and since
$M\subset \cap_{i=1}^\infty \varphi_i^{-1}(0)$ we have  $ \cap
_{i=1}^{\infty}\varphi_{i}^{-1}(0)\ne 0$.
\end{proof}

\bigskip
In \cite{a1} the following theorem is proved.
\begin{theorem} Let $X$ be a real non separable space that has the NIP and $P\in \mathcal{P}(^2X)$. Then there exists an infinite
dimensional subspace $M$ such that $P|_M=0$.
\end{theorem}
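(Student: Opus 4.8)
The plan is to construct, by countable recursion, a sequence $(x_n)_{n\ge 1}$ of nonzero vectors that are pairwise orthogonal and isotropic for the symmetric bilinear form $A=\overset{\vee}{P}$, i.e. $A(x_i,x_j)=0$ for all $i,j$ (so in particular $P(x_i)=A(x_i,x_i)=0$). Once such a sequence is available, I would take $M$ to be the closed linear span of $(x_n)$: every vector in the algebraic span is isotropic because $A$ vanishes on all pairs of basis vectors, and since $A$ is bounded, hence continuous, $P=A(\cdot,\cdot)$ vanishes on the closure as well. Thus $P|_M=0$, and $M$ is infinite dimensional provided the $x_n$ are linearly independent. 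So the whole problem reduces to the recursive step: given finitely many isotropic, mutually $A$-orthogonal vectors, produce a new one.

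First I would feed the NIP into the recursion. Suppose $x_1,\dots,x_n$ have been chosen. For each $i$ set $\psi_i(y)=A(x_i,y)$; these are $n$ continuous linear functionals on $X$, and $[x_1,\dots,x_n]\subset Y_n:=\bigcap_{i=1}^n\psi_i^{-1}(0)$ since the chosen vectors are $A$-orthogonal. Because $X$ is non separable and has the NIP, the Proposition above shows that the common kernel of any countable family of functionals is non separable, so in particular $Y_n$ is non separable and infinite dimensional. Any nonzero $x_{n+1}\in Y_n$ with $P(x_{n+1})=0$ automatically satisfies $A(x_i,x_{n+1})=0$ for $i\le n$; and since the previous vectors span an isotropic space, it is routine to arrange $x_{n+1}\notin[x_1,\dots,x_n]$, so that the $x_n$ stay linearly independent. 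Hence the recursion continues as soon as $P|_{Y_n}$ has a nonzero zero.

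It remains to produce an isotropic vector inside the non separable space $Y_n$, and here the argument splits. If $P|_{Y_n}$ changes sign, choose $u,v\in Y_n$ with $P(u)>0>P(v)$; then $t\mapsto P\bigl((1-t)u+tv\bigr)$ is continuous, the segment avoids $0$ (otherwise $u,v$ would be dependent, forcing $P(u)$ and $P(v)$ to agree in sign), and the intermediate value theorem yields a nonzero zero $w$. If $P|_{Y_n}$ is semidefinite with nontrivial radical $\{x\in Y_n: A(x,\cdot)|_{Y_n}=0\}$, then by Cauchy--Schwarz the zeros of $P|_{Y_n}$ coincide with this radical, and any nonzero radical vector is isotropic. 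The main obstacle is the remaining case, where $P|_{Y_n}$ is definite (equivalently, semidefinite with trivial radical): then $Y_n$ carries no nonzero isotropic vector at all and the recursion stalls. I expect this to be the hard step, and it is precisely where the hypothesis must be exploited in full: the goal would be to pass to a suitable further subspace and extract a countable subfamily of the functionals $A(\cdot,y)$ whose common kernel, produced nonzero by a renewed application of the NIP, lands in the radical. Controlling this reduction to a countable family on a non separable domain — keeping the NIP applicable while still capturing radical behaviour — is the crux, and it is here that the specific technique of \cite{a1} would be brought to bear.
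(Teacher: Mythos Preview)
The paper does not actually prove this statement: the sentence immediately preceding it is ``In \cite{a1} the following theorem is proved,'' so the result is simply quoted from Aron--Boyd--Ryan--Zalduendo and no argument appears in the paper. There is therefore no in-paper proof to compare your proposal against.

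That said, your outline is incomplete in exactly the place you flag. The recursive scheme and the sign-changing/intermediate-value step are fine, but the ``definite'' case is not just the hard step---it is a genuine obstruction that your final paragraph does not dissolve. If $P|_{Y_n}$ is, say, positive definite, there is \emph{no} nonzero isotropic vector in $Y_n$, and no amount of passing to further countable intersections of kernels will manufacture one: any such intersection is again a subspace of $Y_n$, on which $P$ remains positive definite. So the appeal to ``a renewed application of the NIP'' cannot work as stated; you would need an entirely different idea, which you do not supply, and instead you defer to \cite{a1}. In that sense your proposal is a reduction to the literature rather than a proof.

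It is also worth observing that the statement, as transcribed in the paper, appears to need an additional hypothesis. On the real non-separable Hilbert space $\ell_2(\Gamma)$ with $\Gamma$ uncountable, any countable family of functionals is represented by vectors whose joint support is countable, so the common kernel is non-separable and the NIP holds; yet $P(x)=\|x\|^2$ is a continuous $2$-homogeneous polynomial with $P^{-1}(0)=\{0\}$. The result actually proved in \cite{a1} presumably carries an extra assumption (for instance that $P$ is not definite, equivalently that $P$ has a nontrivial zero), and that is precisely the hypothesis that eliminates your obstructing case and lets the recursion run.
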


\begin{corollary}
Let $X$ be a real non separable space that has the NIP and $P\in
\mathcal{P}(^2X)$. Then every P-maximal subspace is infinite
dimensional.
\end{corollary}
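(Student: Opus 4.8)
The plan is to derive the statement by playing the two results recalled just above against each other, with essentially no additional analysis. Since the notion of a $P$-maximal subspace is only defined for a non-zero polynomial, we may assume $P\neq 0$; let $M$ be any $P$-maximal subspace of $X$, and argue by contradiction, assuming $\dim M=k<\infty$.

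With this assumption in hand, $X$ is a real Banach space possessing a $k$-dimensional $P$-maximal subspace, so the real dimension theorem proved above applies — the one asserting that if $P\in\mathcal{P}(^2X)$ has a $k$-dimensional $P$-maximal subspace, then every subspace $H\subset X$ with $P|_H=0$ satisfies $\dim H\le k$. Consequently no subspace on which $P$ vanishes identically can have dimension exceeding $k$; in particular $P$ is not identically zero on any infinite dimensional subspace of $X$.

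On the other hand, $X$ has the NIP and is non separable, so the theorem quoted from \cite{a1} furnishes an infinite dimensional subspace $N\subset X$ with $P|_N=0$, flatly contradicting the previous paragraph. Hence the assumption $\dim M<\infty$ is untenable, and every $P$-maximal subspace must be infinite dimensional. I do not expect any genuine obstacle here: the proof is pure bookkeeping, the only points to verify being that the hypothesis of the real dimension theorem is exactly what the contradiction hypothesis supplies (namely, the existence of a single finite dimensional $P$-maximal subspace), and that its conclusion is strong enough to be incompatible with the infinite dimensional zero-subspace produced, via the NIP, by the result of \cite{a1}.
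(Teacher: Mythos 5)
Your argument is correct and is exactly the paper's intended proof: the paper simply cites the real dimension theorem (Theorem 4) together with the NIP zero-subspace theorem from \cite{a1} (Theorem 13) and combines them as you do. The only point worth making explicit is that the infinite dimensional zero-subspace contains a $(k+1)$-dimensional one, which is what directly contradicts Theorem 4; you handle this implicitly and correctly.
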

\begin{proof}
Easy consequence of Theorems 4 and 13.
\end{proof}

\bigskip
For the complex case, using the same argument as Theorem 2, we can
obtain non separable maximal subspaces.

\bigskip

\begin{proposition} Let X be a complex non separable space and let
$P:X\rightarrow\mathbb{C}$ be a 2 homogeneous polynomial. Suppose
that $X$ has the NIP. Then every P-maximal subspace is non
separable.
\end{proposition}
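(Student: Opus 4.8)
The plan is to mimic the proof of Theorem \ref{carlos}, replacing "infinite dimensional" with "non separable" throughout and using the NIP hypothesis to guarantee that the relevant intersection of kernels is large enough to continue extending a zero-subspace. First I would use Zorn's Lemma on the family $\mathcal{S}$ of subspaces $N$ with $P|_N = 0$ (ordered by inclusion) to fix an arbitrary $P$-maximal subspace $H$; the point is to show $H$ cannot be separable. Suppose for contradiction that $H$ is separable, with a countable dense set whose linear span has a countable (algebraic) spanning sequence, or more simply pick a countable set $\{e_i\}_{i\in\mathbb{N}}$ that is dense in $H$.

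The key step is to associate to this countable family the functionals $\varphi_i \in X^{\ast}$ defined by $\varphi_i(x) = \overset{\vee}{P}(e_i, x)$, which are continuous since $P$ is continuous. By the NIP, $\mathcal{K} = \cap_{i=1}^{\infty} \varphi_i^{-1}(0)$ is non separable, and in particular is not contained in the separable subspace $H$, so there exists $w \in \mathcal{K}$ with $w \notin H$. I would then check that $\overset{\vee}{P}(h, w) = 0$ for all $h \in H$: this holds for $h = e_i$ by construction of $\varphi_i$, and extends to all of $H$ by bilinearity and continuity of $\overset{\vee}{P}(\cdot, w)$ together with density of $\{e_i\}$ in $H$. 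Exactly as in the computation in the proof of Theorem \ref{carlos}, for $h \in H$ and $\alpha \in \mathbb{C}$ we get $P(h + \alpha w) = P(h) + 2\alpha \overset{\vee}{P}(h,w) + \alpha^2 P(w)$. The first term vanishes since $P|_H = 0$, the second since $\overset{\vee}{P}(h,w) = 0$; for the third term we use the simple observation from the introduction (or the argument in Theorem \ref{carlos}) applied in the complex case: since $P(w) = \overset{\vee}{P}(w,w)$ and we may instead choose $w$ inside the two-dimensional subspace spanned by any two independent vectors of $\mathcal{K}$ on which $P$ vanishes — or simply note that $\mathcal{K}$ being non separable contains such a $w$ with $P(w) = 0$ by the introductory remark that any $2$-dimensional subspace contains a nonzero zero of $P$. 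Picking $w \in \mathcal{K} \setminus H$ with $P(w) = 0$ then gives $P|_{H \oplus [w]} = 0$, contradicting the maximality of $H$.

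The main obstacle, and the step requiring the most care, is passing from the countable dense set $\{e_i\}$ of $H$ to the conclusion that $\overset{\vee}{P}(h, w) = 0$ for \emph{all} $h \in H$, not merely on a dense set — this is where continuity of $\overset{\vee}{P}$ is essential, and one must be slightly careful that the separability of $H$ is used correctly (a countable dense subset suffices, since the associated functionals still form a countable family eligible for the NIP, and $\overset{\vee}{P}(\cdot, w)$ is a continuous linear functional vanishing on a dense set hence on $H$). A secondary subtlety is ensuring we can select $w$ both outside $H$ and with $P(w) = 0$: since $\mathcal{K}$ is non separable it certainly has dimension at least $2$ over the span of any vector of $H \cap \mathcal{K}$, so one can find two independent vectors $y, z \in \mathcal{K}$ with $[e_1,\dots]\oplus[y,z]$ genuinely enlarging things, and then apply the one-variable complex polynomial root argument to produce $w = y + \alpha z \in \mathcal{K} \setminus H$ with $P(w) = 0$, finishing the contradiction.
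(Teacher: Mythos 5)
Your proposal is correct and is essentially the paper's intended argument: the paper gives no written proof, stating only that the proposition follows "using the same argument as Theorem 2," and your adaptation (countable dense subset of a putatively separable $H$, the functionals $\varphi_i=\overset{\vee}{P}(e_i,\cdot)$, non separability of $\cap_i\varphi_i^{-1}(0)$ via the NIP and Proposition 11, extension of $\overset{\vee}{P}(h,w)=0$ to all of $H$ by continuity, and the quadratic root trick to pick $w\notin H$ with $P(w)=0$) is exactly that adaptation. The two points you flag as delicate — the density/continuity step and choosing $w$ both outside $H$ and in the zero set of $P$ — are indeed the only places where the finite-dimensional argument of Theorem 2 needs modification, and you handle both correctly.
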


\bigskip

\begin{remark} Now we observe that if $X$ is a Banach space and $M$ is a
closed subspace, since $(X/M)^\ast$ is isometric to $M^\perp$ we
have $X/M$ has the NIP if and only if for every countable family
$\{\varphi_i \}$ in $X^\ast$ with $\varphi_i |_M=0$ there exists
$x\notin M$ such that $x\in \cap
_{i=1}^{\infty}\varphi_{i}^{-1}(0)$; that is, we have $\cap
_{i=1}^{\infty}\varphi_{i}^{-1}(0)$ is a non separable subspace.
As we know this is the case for $l_\infty$ and $c_0$ ( cf
\cite[page 61]{caro} ) we have $l_\infty /c_0$ has the NIP and
consequently if $P:l_\infty/c_0 \rightarrow \mathbb{K}$ is a
2-homogeneous polynomial, in the real case we have every
$P$-maximal subspace is an infinite dimensional subspace and in
the complex case there exists a non separable $P$-maximal
subspace.
\end{remark}

We now recall that in \cite[page 58]{li} the following result is
shown.
\begin{lemma}
Let $(y_n)$ be a sequence in $l_\infty$ and constants $C_1$ and
$C_2$ such that $$C_1\sup_i|\lambda_{i}|\le
\norm{\sum_{i=1}^\infty \lambda_{i} y_i}\le C_2
\sup_i|\lambda_{i}|$$ for every scalar sequence $(\lambda_{i})$
with $\lim \lambda_{i}=0$. Then there is a subsequence $(y_{n_k})$
of $(y_n)$ such that for every $(\lambda_k) \in l_\infty$  we have
$$(C_1/2)\sup_k |\lambda_{k}| \le \|\sum_{k=1}^\infty \lambda_{k} y_{n_k}\|\le
C_2 \sup_k|\lambda_{k}|$$
\end{lemma}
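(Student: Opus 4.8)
The plan is to prove the two estimates separately; the upper one is essentially automatic and requires no passage to a subsequence, so all the work is in the lower one. First I would extract from the hypothesis the coordinatewise fact that $\sum_{n=1}^{\infty}|y_n(j)|\le C_2$ for every coordinate $j$: given a finite set $A\subset\mathbb{N}$, choose signs $\epsilon_n\in\{-1,1\}$ with $\epsilon_n y_n(j)=|y_n(j)|$ and apply the upper estimate to the finitely supported sequence $\lambda=\sum_{n\in A}\epsilon_n e_n$; the $j$-th coordinate of $\sum_i\lambda_i y_i$ is then $\sum_{n\in A}|y_n(j)|$, while $\sup_i|\lambda_i|=1$, so $\sum_{n\in A}|y_n(j)|\le C_2$, and letting $A$ increase gives the claim. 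Consequently, for any subsequence $(y_{n_k})$ and any $(\lambda_k)\in l_\infty$, the series $\sum_k\lambda_k y_{n_k}$ converges absolutely in each coordinate, lies in $l_\infty$, and has norm at most $C_2\sup_k|\lambda_k|$; in particular $|y_n(j)|\to 0$ as $n\to\infty$ for each fixed $j$, which I use below. So the upper estimate holds for every subsequence.

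For the lower estimate I would build $(n_k)$ by an interleaved gliding-hump induction, constructing alongside it an increasing sequence of finite coordinate sets $E_1\subseteq E_2\subseteq\cdots$. Having chosen $n_1<\cdots<n_k$ and $E_1\subseteq\cdots\subseteq E_k$, I first pick $n_{k+1}>n_k$ so large that $\sup_{j\in E_k}|y_{n_{k+1}}(j)|<\gamma_{k+1}$ --- possible since $E_k$ is finite and every coordinate sequence tends to $0$ --- and then pick a finite $E_{k+1}\supseteq E_k$ such that, for every point $(\mu_1,\dots,\mu_{k+1})$ of a fixed finite $\eta_{k+1}$-net of the cube $[-1,1]^{k+1}$, the supremum norm of $\sum_{l\le k+1}\mu_l y_{n_l}$ is attained on $E_{k+1}$ up to an error $\rho_{k+1}$; this is possible because there are finitely many net points and each combination is a single element of $l_\infty$ whose norm is nearly realized on finitely many coordinates. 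I would fix the parameters so that $\sum_{l\ge 2}\gamma_l\le C_1/4$ and so that $2k\eta_k C_2+\rho_k\le C_1/4$ for every $k$ --- all quantities under my control, using $\|y_n\|\le C_2$.

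To verify the lower estimate, take $(\lambda_k)\in l_\infty$ with $\sup_k|\lambda_k|=1$, fix $\epsilon>0$, and choose $K$ with $|\lambda_K|>1-\epsilon$. Applying the hypothesis to $(\lambda_1,\dots,\lambda_K,0,0,\dots)\in c_0$ gives $\|S\|>C_1(1-\epsilon)$, where $S:=\sum_{k\le K}\lambda_k y_{n_k}$; approximating $(\lambda_1,\dots,\lambda_K)$ by a net point and using the defining property of $E_K$ produces a coordinate $j^{*}\in E_K$ with $|S(j^{*})|>C_1(1-\epsilon)-C_1/4$; and the tail satisfies $\sum_{l>K}|y_{n_l}(j^{*})|\le\sum_{l>K}\sup_{j\in E_K}|y_{n_l}(j)|\le\sum_{l>K}\gamma_l\le C_1/4$. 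Hence the $j^{*}$-th coordinate of $\sum_k\lambda_k y_{n_k}$ has modulus at least $C_1(1-\epsilon)-C_1/2$, and letting $\epsilon\to 0$ gives $\|\sum_k\lambda_k y_{n_k}\|\ge (C_1/2)\sup_k|\lambda_k|$, which with the upper bound finishes the proof.

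The step I expect to be the obstacle, and the reason for the two-layer selection, is that one cannot in general force the $y_{n_k}$ to be almost finitely supported with disjoint supports marching to infinity: an element of $l_\infty$ need not be close to any finitely supported vector --- for instance, $y_n=\chi_{S_n}$ for a partition $\mathbb{N}=\bigsqcup_n S_n$ satisfies the hypothesis with $C_1=C_2=1$ yet admits no almost finitely supported subsequence. The remedy is to confine the coordinate witnessing the norm of a finite section to a prescribed finite block $E_K$ and to kill the tail only there; ensuring that $E_K$ sees almost all of every relevant combination of $y_{n_1},\dots,y_{n_K}$ while keeping every later $y_{n_l}$ negligible on $E_K$ is precisely what the inductive bookkeeping accomplishes, and splitting the admissible slack evenly between these two requirements is what yields the constant $C_1/2$.
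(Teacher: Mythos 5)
The paper does not actually prove this lemma --- it is quoted from Lindenstrauss--Tzafriri \cite[p.~58]{li} --- so there is no in-paper argument to compare against; judged on its own, your proof is correct and is essentially the standard one from that reference: the sign trick giving $\sum_n |y_n(j)|\le C_2$ for each coordinate $j$ (hence well-definedness of the sum and the upper bound for any subsequence), followed by the two-layer induction in which nested finite coordinate sets $E_k$, chosen via finite nets of the coefficient cube, nearly norm every finite section, while the gliding-hump choice of $n_{k+1}$ makes all later vectors uniformly small on $E_k$. The only cosmetic point is that for the complex scalars used elsewhere in the paper the net should be taken in the closed unit polydisc rather than $[-1,1]^{k+1}$, which changes nothing in the estimates.
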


\begin{theorem} Let $M\subset l_{\infty}$ be a subspace isomorphic
to $c_0$. Then there exists a subspace $N\subset M$, and a
subspace $H\subset l_\infty$ containing $N$, and an isomorphism
$T:l_\infty \rightarrow H$ such that $T|_{c_0}:c_0\rightarrow N$
is an isomorphism. Moreover $H/N$ has the NIP.
\end{theorem}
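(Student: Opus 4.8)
The plan is to build everything on top of the classical fact—used already via \cite{li}—that a copy of $c_0$ inside $l_\infty$ can be ``stabilized'' so that the span of a suitable subsequence of the unit-vector images is all of $l_\infty$ (not just $c_0$). Concretely, let $S:c_0 \to M$ be an isomorphism and put $y_n = S(e_n)$, where $(e_n)$ is the unit vector basis of $c_0$. Then $(y_n)$ satisfies the two-sided estimate in Lemma~16 with $C_1 = \|S^{-1}\|^{-1}$ and $C_2 = \|S\|$, so we obtain a subsequence $(y_{n_k})$ with the property that $\sum_k \lambda_k y_{n_k}$ converges in $l_\infty$ for \emph{every} bounded scalar sequence $(\lambda_k)$ and defines an isomorphic embedding of $l_\infty$. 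Set $H = \overline{[y_{n_k} : k \ge 1]}^{\,l_\infty}$, equipped with this embedding, and let $T:l_\infty \to H$ be the map $T((\lambda_k)) = \sum_k \lambda_k y_{n_k}$; by Lemma~16 this $T$ is an isomorphism onto $H$. Let $N = \overline{[y_{n_k}: k \ge 1]}^{\,M} = T(c_0)$, a subspace of $M$, and observe $T|_{c_0}:c_0 \to N$ is an isomorphism and $N \subset M \subset l_\infty$ while $N \subset H$.

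The remaining, and main, assertion is that $H/N$ has the NIP. The idea is to transport the problem through $T$ to the model pair $(l_\infty, c_0)$: since $T:l_\infty \to H$ is an isomorphism with $T(c_0) = N$, it induces an isomorphism $\bar T : l_\infty / c_0 \to H/N$. A Banach space having the NIP is an isomorphic invariant (if $\{\psi_i\}$ is a countable family in the dual of the target, pull them back by the isomorphism, intersect the kernels in the source, and push a nonzero vector forward), so it suffices to know that $l_\infty/c_0$ has the NIP—and this is exactly what the Remark preceding Lemma~16 records, citing \cite[page 61]{caro} for the fact that every countable family in $c_0^\perp \subset l_\infty^\ast$ has nontrivial common kernel modulo $c_0$. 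Thus $H/N$ has the NIP.

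The step I expect to be the real obstacle is the very first one: verifying that the subsequence produced by Lemma~16 genuinely yields an isomorphism \emph{onto} its closed span $H$ and that $N = T(c_0)$ sits inside $M$ as claimed. The convergence of $\sum_k \lambda_k y_{n_k}$ for bounded (not null) $(\lambda_k)$ is not automatic from norm estimates alone—one needs that the partial sums actually form a Cauchy sequence, or equivalently that the series converges coordinatewise in $l_\infty$ to an element whose norm is controlled; this is precisely the content hidden in Lemma~16's conclusion, and I would lean on it as a black box, checking only that the hypotheses $C_1 \sup_i |\lambda_i| \le \|\sum \lambda_i y_i\| \le C_2 \sup_i |\lambda_i|$ hold for null sequences, which is immediate from $S$ being an isomorphism. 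A secondary point requiring a line of care is that $N$, defined as the closed span of $(y_{n_k})$ inside $l_\infty$, coincides with $T(c_0)$ and is contained in $M$: containment in $M$ follows because each $y_{n_k} \in M$ and $M$ is closed, while $N = T(c_0)$ follows since $T(c_0)$ is the closed span of $\{T e_k\} = \{y_{n_k}\}$ by continuity of $T$ and density of finitely-supported sequences in $c_0$.
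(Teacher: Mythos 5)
Your argument follows the paper's proof essentially step for step: apply Lemma 16 to the images $y_n=S(e_n)$ of the $c_0$-basis, let $T$ be the resulting embedding of $l_\infty$, and reduce the NIP claim for $H/N$ to the known fact that $l_\infty/c_0$ has the NIP (the paper does this by pulling functionals back through $T$ rather than forming the quotient isomorphism $\bar T$ explicitly, but that is the same computation).

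One slip needs fixing: you define $H$ as the closed linear span of $\{y_{n_k}\}$ in $l_\infty$. That set is the closure of the \emph{finite} linear combinations, which is exactly $T(c_0)=N$; with that definition you would have $H=N$ and $H/N=\{0\}$, which does not have the NIP, and $T$ would not map onto $H$. What you need (and what the paper writes) is $H=T(l_\infty)=\{\sum_{k}\lambda_k y_{n_k}:(\lambda_k)\in l_\infty\}$, which strictly contains the closed span and is closed in $l_\infty$ because $T$ is bounded below. Since the rest of your argument (the isomorphism $\bar T:l_\infty/c_0\rightarrow H/N$ and the invariance of the NIP under isomorphism) plainly presupposes $H=T(l_\infty)$, this is a notational error rather than a gap in the idea, but as written the definition contradicts the claims made about it.
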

\begin{proof} Using the previous lemma we can get a sequence $(y_k)$ in $M$
and constants $C_1$ and $C_2$ such that
$$C_1\sup |\lambda_k|\le \sup_i |\sum_{k=1}^\infty \lambda_k
y_k(i)|\le C_2\sup |\lambda_k|\;\; \mbox{if}\;\; (\lambda_k)\in
l_\infty$$

Now define
$$H=\{\sum_{k=1}^\infty \lambda_k y_k;(\lambda_k)\in
l_\infty\}\;\; , \;\;N=\overline{span}\{y_k\}$$ and
$$T:l_\infty \rightarrow H \;\; \mbox{by} \;\;T(\lambda_k)=
\sum_{k=1}^\infty \lambda_k y_k.$$ Suppose $\{\varphi_i\} \subset
H^\ast $ with $\varphi_i|_N=0$. Then $\psi_i=\varphi_i \circ T \in
l_\infty ^\ast$ and $\psi_i|_{c_0}=0$. Thus there is $x\in
l_\infty \backslash c_0$ such that $x\in \cap_{i=1}^\infty
\psi_i^{-1}(0)$. Consequently $T(x) \notin N$ and $T(x)\in
\cap_{i=1}^\infty \varphi_i^{-1}(0)$. Now the results follow from
Remark 16.
\end{proof}

\bigskip
To prove the main result of this paper we need the following
lemma.

\begin{lemma}Let X be a complex Banach space and $P:X\rightarrow
\mathbb{C}$ a 2-homogeneous polynomial. Suppose that $M$ is a
separable P-maximal subspace. Then there exists a countable family
$\{\varphi_i\}$ in $X^\ast $ such that $M=\cap_{i=1}^\infty
\varphi_i^{-1}(0).$
\end{lemma}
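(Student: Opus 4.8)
The plan is to build the countable family in two stages: a first batch of functionals coming from a countable dense subset of $M$, and then at most one additional functional supplied by Hahn--Banach.

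First I would record two easy facts. Since $P$ is continuous and $P|_M=0$, we have $P|_{\overline M}=0$; as $\overline M$ is a subspace containing $M$, maximality forces $\overline M=M$, so $M$ is closed. Polarizing $P|_M=0$ gives $\overset{\vee}{P}(m,m')=0$ for all $m,m'\in M$. Now fix a countable dense set $\{m_n\}\subset M$ and set $\varphi_n(x)=\overset{\vee}{P}(m_n,x)\in X^\ast$; then $M\subset\varphi_n^{-1}(0)$ for all $n$. Let $K=\bigcap_n\varphi_n^{-1}(0)$. Because each linear functional $\overset{\vee}{P}(\cdot,x)$ is continuous and $\{m_n\}$ is dense, $x\in K$ exactly when $\overset{\vee}{P}(m,x)=0$ for all $m\in M$; in particular $M\subset K$, and for $x\in K$, $m\in M$, $\alpha\in\mathbb{C}$ we get $P(m+\alpha x)=P(m)+2\alpha\overset{\vee}{P}(m,x)+\alpha^2P(x)=\alpha^2P(x)$.

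The crux of the argument, and the step I expect to be the main point, is to show $\dim(K/M)\le 1$. I would argue by contradiction: if there were $x,y\in K$ linearly independent modulo $M$, then $[x,y]$ would be a $2$-dimensional subspace of $K$ containing no nonzero vector of $M$; by the elementary observation stated at the beginning of the paper (a complex $2$-homogeneous polynomial has a nontrivial zero on every $2$-dimensional subspace), there would be $z\in[x,y]\setminus\{0\}$ with $P(z)=0$, and then $z\notin M$ while the identity above gives $P(m+\alpha z)=\alpha^2P(z)=0$, i.e. $P|_{M\oplus[z]}=0$, contradicting the maximality of $M$. This is precisely where both the complex scalar field (via the root-counting in the observation) and the maximality of $M$ are essential; separability enters only to make the first batch of functionals countable.

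Finally, if $K=M$ the family $\{\varphi_n\}$ already works. Otherwise $K=M\oplus[x_0]$ for some $x_0\notin M$, and since $M$ is closed, Hahn--Banach yields $\varphi_0\in X^\ast$ with $\varphi_0|_M=0$ and $\varphi_0(x_0)\ne 0$; then any element $m+\alpha x_0$ of $K\cap\varphi_0^{-1}(0)$ has $\alpha\varphi_0(x_0)=0$, hence $\alpha=0$, so it lies in $M$. In both cases we obtain $M=\bigcap_{n\ge 0}\varphi_n^{-1}(0)$ with a countable family, as required.
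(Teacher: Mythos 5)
Your proposal is correct and follows essentially the same route as the paper: define $\varphi_n(x)=\overset{\vee}{P}(m_n,x)$ from a countable dense subset of $M$, show the common kernel $K$ exceeds $M$ by at most one dimension, and kill the possible extra line with one Hahn--Banach functional. The only difference is that you explicitly justify the step $\dim(K/M)\le 1$ (via the two-dimensional zero observation and $P(m+\alpha x)=\alpha^2P(x)$ for $x\in K$), which the paper asserts directly from maximality; your write-up fills that gap correctly.
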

\begin{proof} Let $\{x_1,x_2,\ldots \}$ be a countable set such
that $M=\overline{\{x_1,x_2,\ldots \}}$. Thus if
$\overset{\vee}{P}$ is the symmetric bilinear form associated to P
it is clear that $M\subset \cap_{i=1}^\infty \varphi_i^{-1}(0)$
where $\varphi_i \in X^\ast$ is defined by
$$\varphi_i(x)=\overset{\vee}{P}(x_i,x)\;\;\mbox{for}\;\;i=1,2,\ldots$$ Since $M$
is a P-maximal subspace we have $M=\cap_{i=1}^\infty
\varphi_i^{-1}(0)$ or there exist $ y\in X$ such that
$\cap_{i=1}^\infty \varphi_i^{-1}(0)=M\oplus [y]$. If
$\cap_{i=1}^\infty \varphi_i^{-1}(0)=M\oplus [y]$ by the
Hahn-Banach theorem we may get $\varphi_0 \in X^\ast $ such that
$\varphi_0|_M=0$ and $\varphi_0(y)=1$ and so $M=\cap_{i=0}^\infty
\varphi_i^{-1}(0)$.
\end{proof}

\bigskip
We observe that this lemma is not true in the real case.

\begin{theorem}
Let $P:l_\infty \rightarrow \mathbb{C}$ be a 2-homogeneous
polynomial. Then there exist a non separable P-maximal subspace.
\end{theorem}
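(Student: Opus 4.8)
The plan is to argue by contradiction: suppose every $P$-maximal subspace of $l_\infty$ is separable, and derive a contradiction by producing a copy of $c_0$ inside such a subspace, then invoking Theorems 13 (via NIP) and 18.

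First I would fix a $P$-maximal subspace $M$ of $l_\infty$. By the remark following Theorem~1 (the proof of Theorem~\ref{carlos}), $M$ is infinite dimensional, and by assumption it is separable. By Lemma~19 there is a countable family $\{\varphi_i\}\subset l_\infty^\ast$ with $M=\cap_{i=1}^\infty \varphi_i^{-1}(0)$. The next step is to use the structure of $l_\infty$: an infinite dimensional separable subspace of $l_\infty$ — or more precisely, the fact that $c_0 \subset l_\infty$ and $c_0$ itself carries $P|_{c_0}$, a $2$-homogeneous polynomial on a separable space — should let us locate a subspace isomorphic to $c_0$ on which $P$ vanishes. Concretely, I would restrict $P$ to $c_0$: by the complex zero-set results (the argument of Theorem~\ref{carlos} applied inside the infinite dimensional space $c_0$, or the cited results of \cite{p2}), $P$ is identically zero on some infinite dimensional subspace $M_0 \subset c_0$. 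Since every infinite dimensional subspace of $c_0$ contains a further subspace isomorphic to $c_0$ (and complemented in $c_0$), we get a subspace $M' \subset c_0 \subset l_\infty$ with $M'$ isomorphic to $c_0$ and $P|_{M'}=0$.

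Now apply Theorem~18 to $M'$: there is $N \subset M'$, a subspace $H \subset l_\infty$ containing $N$, and an isomorphism $T:l_\infty \to H$ with $T|_{c_0}:c_0 \to N$ an isomorphism, and moreover $H/N$ has the NIP. Since $N \subset M'$ and $P|_{M'}=0$ we have $P|_N=0$. Consider the $2$-homogeneous polynomial $P\circ T : l_\infty \to \mathbb{C}$; transporting back, this amounts to studying $P|_H$. The key point is that $H/N$ having the NIP, together with $P|_N = 0$, should force $P$ to vanish on a non separable subspace of $H$: one forms from $\overset{\vee}{P}$ a countable family of functionals on $H$ vanishing on $N$ (namely $x \mapsto \overset{\vee}{P}(n_j,x)$ for a countable dense set $\{n_j\}$ in $N$), intersects their kernels using the NIP of $H/N$ in the form given by Remark~16 to obtain a non separable subspace $K$ with $N \subsetneq K$ and $P|_K = 0$ — arguing as in Proposition~15 / the complex analogue Proposition~17. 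Then a $P$-maximal subspace containing $K$ is non separable, contradicting our assumption.

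The main obstacle I anticipate is the middle step: passing from ``$P$ vanishes on $N \subset H$'' to ``$P$ vanishes on a non separable subspace of $H$'' using only that $H/N$ has the NIP. This is exactly the pattern of Theorem~13 (real case) and Proposition~17 (complex case), but those are stated for $P$-maximal subspaces being infinite dimensional or for polynomials on the whole space; here one needs the relative version: a $2$-homogeneous polynomial $P$ on $H$ that already vanishes on $N$, with $H/N$ having the NIP, vanishes on a non separable subspace. The linearized functionals $\overset{\vee}{P}(n_j,\cdot)$ are automatically zero on $N$ since $P|_N=0$, so by the NIP of $H/N$ their common kernel strictly enlarges $N$ to a non separable subspace on which $\overset{\vee}{P}(n_j,\cdot)=0$ for all $j$, hence (by density and continuity) $\overset{\vee}{P}(N,\cdot)=0$ there; then one still has to kill the ``$P(y)$'' terms coming from the new directions, which in the complex case is handled by the root-finding argument from the introduction and Theorem~\ref{carlos}, and in the separable-versus-non-separable bookkeeping by a transfinite or Zorn-type exhaustion. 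Making this relative argument precise — and checking that it genuinely produces a non separable (not merely infinite dimensional) zero subspace — is where the real work lies; everything else is assembling Theorems 13, 18 and Lemma 19.
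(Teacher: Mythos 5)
You have assembled the right ingredients (Theorem~2 applied inside $c_0$, Theorem~18, Lemma~19, Remark~16), but the step you yourself flag as ``where the real work lies'' is a genuine gap --- and it is a step the argument does not actually need. After Theorem~18 gives you $N\subset H$ with $H/N$ having the NIP and $P|_N=0$, you try to upgrade this to ``$P$ vanishes identically on a non separable subspace of $H$.'' Intersecting the kernels of the functionals $x\mapsto \overset{\vee}{P}(n_j,x)$ does produce a non separable $K\supset N$ on which $\overset{\vee}{P}(n,\cdot)=0$ for every $n\in N$, but $P|_K$ is then just another $2$-homogeneous polynomial on a non separable space, and there is no reason for $K$ (or $K/N$) to have the NIP, so the construction cannot be iterated; the one-root-at-a-time device from the introduction only adjoins countably many new directions, and a Zorn-type exhaustion yields maximality, not non separability --- which is precisely the conclusion you are trying to establish, so as sketched the route is circular.

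The paper avoids this entirely by using Lemma~19 in the opposite direction. Let $\mathcal{M}$ be a $P$-maximal subspace containing $N$ (not an arbitrary maximal subspace: it must contain the copy of $c_0$ produced by Theorem~18, so your opening application of Lemma~19 to an unrelated $M$ plays no role). If $\mathcal{M}$ were separable, Lemma~19 would give countably many $\varphi_i\in l_\infty^{\ast}$ with $\mathcal{M}=\cap_{i}\varphi_i^{-1}(0)$; each $\varphi_i$ vanishes on $N\subset\mathcal{M}$, so restricting to $H$ and invoking the NIP of $H/N$ via Remark~16, the set $H\cap\bigl(\cap_i\varphi_i^{-1}(0)\bigr)$ is a non separable subspace, yet it sits inside the separable $\mathcal{M}$ --- a contradiction. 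No non separable zero subspace of $P$ is ever constructed; the only zero-set input is the infinite dimensional one from Theorem~2, which is exactly what feeds Theorem~18.
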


\begin{proof}
From theorem 2 there exist an infinite dimensional subspace
$M\subset c_0$ such that $P|_M=0$. Since $M$ contains a subspace
$N$ which is isomorphic to $c_0$ using the previous theorem we can
get subspaces $N_1\subset N$ and $H\subset l_\infty$ such that
$N_1\subset H $, $N_1$ being isomorphic to $c_0$ and $H$ being
isomorphic to $l_\infty$ and $H/N_1$ has the NIP. Let
$\mathcal{M}$ be a P-maximal subspace containing $N_1$. If
$\mathcal{M}$ is a separable subspace, then by lemma 19 there
exist a countable family of functional $\{\varphi_i\}$ such that
$$N_1\subset \cap_{i=1}^\infty \varphi_i^{-1}(0)=\mathcal{M}$$ a
contradiction with to the Remark 16 because $H/N_1$ has the NIP.
\end{proof}

First, by $\n$ we indicate the set $\{1,2,\ldots\}$ and by $\n _0$
we indicate the set $\{0,1,2,\ldots\}$. Given $n,j$ and $k\geq 2$
in $\n$ we define $\del{n}{j}{k}$= cardinality of the set
$\{(\cart)\in
{\n_0}^{k-1}\,\,\text{with}\,\,\sum_{h=1}^{k-1}i_h=n-j\}$ and it
is clear that $\del{n}{j}{k}>\del{n}{j}{k-1}$. With these
notations we have the following theorem.

\begin{theorem} Let $f:\n \rightarrow \n$ be a function having
	the following three properties:
	
	1) $f(1)=2$
	
	2) $f(m)>f(n)$ for every $m>n$ in $\n$.
	
	3) For every complex $f(k)$-dimensional vector space $X$ and
	$\varphi \in X^{\ast}$ there exists a $k$-dimensional subspace
	$H\subset X$ such that $\varphi|_H=0$.
	
	\medskip
	
	Let $f_1,f_2,\ldots:\n \rightarrow \n$ be the sequence of
	functions given by
	
	$f_1=f$
	
	$f_2(1)=2$ and $f_2(k)=k-1+(\stack{\del{2}{1}{k}}{\f{f_1}})(2)$ if
	$k\geq 2$
	
	$f_3(1)=2$ and $f_3(k)=k-1+(\stack{\del{3}{2}{k}}{\f{f_2}}\circ
	\stack{\del{3}{1}{k}}{\f{f_1}})(2)$
	
	$\vdots$
	
	$f_n(1)=2$ and $f_n(k)=k-1
	+(\stack{\del{n}{n-1}{k}}{\f{f_{n-1}}}\circ
	\stack{\del{n}{n-2}{k}}{\f{f_{n-2}}}\circ \ldots \circ
	\stack{\del{n}{1}{k}}{\f{f_1}})(2)$
	\bigskip
	
	Then, we have:
	\bigskip
	
	1) $f_i(k)>k$ for every $i$ and $k$ in $\n$.
	
	2) $(\stack{m}{\f{f_i}})(k)>(\stack{n}{\f{f_i}})(k)$ for every
	$m>n$ and $k$ and $i$ in $\n$.
	
	3) $f_i(m)>f_i(n)$ for every $m>n$ and $i$ in $\n$.
	
	4) Fixed $i$ and $k$ in $\n$, for every complex
	$f_i(k)$-dimensional vector space $X$ and $P\in P(^iX)$ there
	exists a $k$-dimensional subspace $H\subset X$ such that $P|_H=0$.
	
\end{theorem}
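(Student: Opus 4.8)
The plan is to prove all four assertions simultaneously by induction on $i$, since the functions $f_i$ are defined recursively and the statements for $f_i$ will depend on those for $f_1,\ldots,f_{i-1}$. The base case $i=1$ is essentially the hypothesis on $f=f_1$: property 3) in the hypothesis gives assertion 4) for $i=1$ (a linear functional $P\in\mathcal{P}(^1X)$ vanishes on a $k$-dimensional subspace when $\dim X=f(k)$), property 2) in the hypothesis gives assertion 3), and assertion 1) ($f_1(k)>k$) follows because $f_1(k)\geq k$ would already force a $k$-dimensional kernel but we actually need strict inequality; here I would note $f_1(1)=2>1$ and use strict monotonicity together with the fact that if $f_1(k)=k$ for some $k$ we could take $\varphi$ nonzero on $X$ and get a contradiction, so $f_1(k)>k$ for all $k$. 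Assertion 2) for $i=1$ is the iterated version: $(\stack{m}{\f{f_1}})(k)>(\stack{n}{\f{f_1}})(k)$ for $m>n$ follows by iterating assertion 1) and assertion 3).

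For the inductive step, fix $i\geq 2$ and assume assertions 1)--4) hold for all indices $<i$. First I would establish 1), 2), 3) for $f_i$ using only the recursive formula $f_i(k)=k-1+\bigl(\stack{\del{i}{i-1}{k}}{\f{f_{i-1}}}\circ\cdots\circ\stack{\del{i}{1}{k}}{\f{f_1}}\bigr)(2)$ and the already-known monotonicity and growth of the $f_j$, $j<i$: the composition of strictly increasing functions each exceeding the identity is again strictly increasing and exceeds the identity, which combined with the fact that $\del{i}{j}{k}$ is strictly increasing in $k$ (stated in the excerpt) gives 3), and then 1) follows since the inner composition applied to $2$ is at least $2$, making $f_i(k)\geq k+1$; 2) again follows by iteration.

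The heart of the matter — and the step I expect to be the main obstacle — is assertion 4) for $f_i$: given a complex vector space $X$ with $\dim X=f_i(k)$ and $P\in\mathcal{P}(^iX)$, produce a $k$-dimensional $H$ with $P|_H=0$. The strategy is the standard inductive-on-degree trick. Pick $e_1\in X$ with $P(e_1)=0$ (possible since $\dim X\geq 2$; indeed over $\mathbb{C}$ an $i$-homogeneous polynomial on a line has a zero, or more simply pass to a $2$-dimensional subspace and use the observation in the introduction). On the hyperplane-type complement, expand $P(e_1+x)=\sum_{j=0}^{i}\binom{i}{j}\overset{\vee}{P}(e_1^{\,j},x^{\,i-j})$; for this to vanish for all $x$ in a subspace we need all $\binom{i}{j}$ "partial polynomials" $x\mapsto\overset{\vee}{P}(e_1^{\,j},x^{\,i-j})$ of degrees $i-1,i-2,\ldots,0$ to vanish there. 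The count $\del{i}{j}{k}$ is designed exactly to track how many scalar coefficient-polynomials of degree $i-j$ arise, and the nested composition $\stack{\del{i}{j}{k}}{\f{f_{i-j}}}$ tells us how large a space we must start from so that, applying the induction hypothesis for degree $i-j$ repeatedly $\del{i}{j}{k}$ times, we can kill all of them while retaining a $k$-dimensional subspace. I would carry this out by choosing the basis vectors $e_1,\ldots,e_k$ one at a time, at each stage passing to the common zero set of the finitely many lower-degree polynomials forced by the previous choices, and checking that the recursive dimension bookkeeping in the definition of $f_i$ exactly guarantees that after $k$ steps a nonzero vector (hence a $k$-dimensional $H$) survives; the delicate point is matching the order of composition of the $f_j$'s with the order in which the lower-degree constraints are imposed, and verifying that $P$ restricted to $H=[e_1,\ldots,e_k]$ genuinely vanishes — not just $P(e_s)=0$ for each $s$, but all mixed terms $\overset{\vee}{P}(e_{s_1},\ldots,e_{s_i})$, which is what the successive hyperplane-kernel reductions are arranged to ensure.
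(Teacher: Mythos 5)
Your plan is correct and follows essentially the same route as the paper: an outer induction on the degree $i$ with an inner induction on $k$ (your one-vector-at-a-time construction is just the unrolled form of the paper's inner induction), at each stage applying the lower-degree induction hypothesis $\delta_{i,j,k}$ times to annihilate the partial polynomials $x\mapsto\overset{\vee}{P}(e_1^{i_1},\ldots,e_{k-1}^{i_{k-1}},x^{j})$ in a nested chain of subspaces before extracting the next basis vector. The points you flag as delicate (matching the order of composition of the $f_j$ and checking that all mixed terms vanish, not just $P(e_s)=0$) are exactly what the paper's bookkeeping with the nested compositions is arranged to handle.
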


\begin{proof}
	The assertions (1) and (3) are very easy to prove by induction and
	the assertion (2) follows from (1). Thus we will prove (4) by
	induction on $i$. If $i=1$ the result is clear. Suppose that for
	$i=1,2,\ldots,n-1$ the result is true and we will prove that in
	this case the result is also true for $i=n$. We prove for $i=n$ by
	induction on $k$.
	
	If $k=1$ we have $f_n(1)=2$ and the result is clear. Now let $X$
	be a $f_n(k)$-dimensional space and $P\in P(^nX)$. From the
	assertion (3) we have that $f_n(k)>f_n(k-1)$ and thus there exists
	a $f_n(k-1)$-dimensional subspace $M\subset X$. By induction there
	exists a $(k-1)$-dimensional subspace
	$H_1=[x_1,x_2,\ldots,x_{k-1}]\subset M \subset X$ such that
	$P|_{H_1}=0$. As dim$X$=$f_n(k)=k-1
	+(\stack{\del{n}{n-1}{k}}{\f{f_{n-1}}}\circ
	\stack{\del{n}{n-2}{k}}{\f{f_{n-2}}}\circ \ldots \circ
	\stack{\del{n}{1}{k}}{\f{f_1}})(2)$ there exists a subspace
	$N\subset X$ such that $X=H_1\oplus N$ and dim$N$=$
	(\stack{\del{n}{n-1}{k}}{\f{f_{n-1}}}\circ
	\stack{\del{n}{n-2}{k}}{\f{f_{n-2}}}\circ \ldots \circ
	\stack{\del{n}{1}{k}}{\f{f_1}})(2)$. Let me prove that there
	exists a 2-dimensional subspace $H\subset N$ such that $P|_H=0$.
	For this end for each $(\cart )\in {\n_0}^{k-1}$ such that
	$i_1+\ldots +i_{k-1}=1$ we take the $(n-1)$ homogeneous polynomial
	on $N$ given by
	$$P_{\cart} (x)=A(x_1^{i_1},x_2^{i_2},\dots,x_{k-1}^{i_{k-1}},x^{n-1}).$$
	Note that we have the polynomials
	$$P_1(x)=A(x_1,x^{n-1}),P_2(x)=A(x_2,x^{n-1}),\ldots,
	P_{k-1}(x)=A(x_{k-1},x^{n-1})$$
	
	As $P_1$ is an $(n-1)$-homogeneous polynomial on $N$, by induction
	there is a subspace $N_1\subset N$ with dim$N_1$=$
	(\stack{\del{n}{n-1}{k}-1}{\f{f_{n-1}}}\circ
	\stack{\del{n}{n-2}{k}}{\f{f_{n-2}}}\circ \ldots \circ
	\stack{\del{n}{1}{k}}{\f{f_1}})(2)$ and $P_1|_{N_1}=0$. As $P_2$
	is a $(n-1)$-homogeneous polynomial on $N_1$, there exists a
	subspace $N_2\subset N_1$ such that dim$N_2$=$(\stack{\del{n}{n-1}{k}-2}{\f{f_{n-1}}}\circ
	\stack{\del{n}{n-2}{k}}{\f{f_{n-2}}}\circ \ldots \circ
	\stack{\del{n}{1}{k}}{\f{f_1}})(2)$ and $P_2|_{N_2}=0$. Continuing
	we obtain a subspace $N_{k-1}$ such that $N_{k-1}\subset
	N_{k-2}\subset \ldots\subset N_1\subset N$ and
	dim$N_{k-1}$=$\stack{\del{n}{n-2}{k}}{\f{f_{n-2}}}\circ \ldots
	\circ \stack{\del{n}{1}{k}}{\f{f_1}})(2)$ and $P_j|_{N_{k-1}}=0$
	for every $j=1,2,\ldots,k-1$. Now for each $(\cart )$ such that
	$i_1+\ldots +i_{k-1}=2$ we take the $(n-2)$ homogeneous polynomial
	on $N_{k-1}$ given by
	$$P_{\cart}
	(x)=A(x_1^{i_1},x_2^{i_2},\dots,x_{k-1}^{i_{k-1}},x^{n-2}).$$ Now
	we can use the same argument above for each $P_{\cart}$ and get a
	subspace $H_2$ such that $H_2\subset N_{k-1}$ and
	dim$H_2$=$\stack{\del{n}{n-3}{k}}{\f{f_{n-3}}}\circ \ldots \circ
	\stack{\del{n}{1}{k}}{\f{f_1}})(2)$ and $P_{\cart}|_{H_2}=0$ for
	every $\cart$ such that $i_1+\ldots +i_{k-1}=2$ or $i_1+\ldots
	+i_{k-1}=1$. Continuing we obtain a subspace $H$ such that
	$H\subset N$ and dim$H$=2 and $P_{\cart}|_H=0$ for every $\cart$
	with $i_1+\ldots +i_{k-1}=j$ with $j=1,2,\ldots,n-1$. Now take
	$y\in H$ such that $y\ne 0$ and $P(y)=0$. As $y\in H$ we have
	$$A(x_1^{i_1},\dots,x_{k-1}^{i_{k-1}},y^j)=0\,\, \mbox{for
		every}\,\, \cart \,\,\mbox{with}\,\,i_1+\ldots +i_{k-1}=n-j$$ and
	for every $j=0,1,\ldots,n$. So we have $$P|_{H_1\oplus [y]}=0$$.
\end{proof}
\begin{remark} Note that we can get a sequence as
	in the theorem [7], taking $f(k)=f_1(k)=k+1$ and in that case we
	have $f_2(k)=2k$.
\end{remark}
Now the following corollary is clear.

\begin{corollary} Let $(f_i)$ be a sequence as in the theorem [7]. let
	$X$ be a complex vector space with dim$X$=$(f_{i_1}\circ
	f_{i_2}\circ \ldots \circ f_{i_p})(k)$ and $P_1\in
	P(^{i_1}X),P_2\in P(^{i_2}X),\ldots,P_p\in P(^{i_p}X)$. Then there
	exists a k-dimensional subspace $H\subset X$ such that $P_i|_H=0$
	for every $i=1,2,\ldots,p$.
\end{corollary}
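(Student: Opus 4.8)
The plan is to iterate property (4) of Theorem~7 exactly $p$ times, stripping off one polynomial at each step. Introduce the nested dimensions $m_0 = \dim X = (f_{i_1}\circ f_{i_2}\circ \cdots \circ f_{i_p})(k)$, $m_j = (f_{i_{j+1}}\circ f_{i_{j+2}}\circ\cdots\circ f_{i_p})(k)$ for $1 \le j \le p-1$, and $m_p = k$. The only arithmetic observation needed is that $m_{j-1} = f_{i_j}(m_j)$ for every $j = 1,2,\ldots,p$, which is immediate from associativity of composition; in words, the composition $f_{i_1}\circ\cdots\circ f_{i_p}$ has been arranged precisely so that removing $f_{i_1}$ from the front leaves $f_{i_2}\circ\cdots\circ f_{i_p}$, which is exactly the input for the next application of property (4).

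First I would apply property (4) of Theorem~7 to the $f_{i_1}(m_1)$-dimensional space $X$ and the polynomial $P_1\in P(^{i_1}X)$, obtaining an $m_1$-dimensional subspace $H_1\subset X$ with $P_1|_{H_1}=0$. For the inductive step, suppose I have a chain $H_j\subset H_{j-1}\subset\cdots\subset H_1\subset X$ with $\dim H_j = m_j$ and $P_1|_{H_j}=P_2|_{H_j}=\cdots=P_j|_{H_j}=0$. The restriction $P_{j+1}|_{H_j}$ is an $i_{j+1}$-homogeneous polynomial on $H_j$ — its associated symmetric $i_{j+1}$-linear form is simply the restriction of the one associated to $P_{j+1}$ — and $\dim H_j = m_j = f_{i_{j+1}}(m_{j+1})$, so property (4) yields an $m_{j+1}$-dimensional subspace $H_{j+1}\subset H_j$ with $P_{j+1}|_{H_{j+1}}=0$; since $H_{j+1}\subset H_j$, the earlier polynomials $P_1,\ldots,P_j$ also vanish on $H_{j+1}$.

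After $p$ steps this produces $H := H_p$, a $k$-dimensional subspace of $X$ with $P_i|_H = 0$ for every $i=1,2,\ldots,p$, which is the claim.

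There is essentially no obstacle: the corollary is a purely formal consequence of Theorem~7 together with the trivial fact that restricting a homogeneous polynomial to a subspace does not change its degree. The only point requiring a little care is the bookkeeping of the dimensions $m_j$, i.e. checking that each application of property (4) consumes exactly the front factor $f_{i_j}$ of the remaining composition; note in particular that properties (1)--(3) of Theorem~7 play no role in this deduction.
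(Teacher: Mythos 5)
Your proof is correct and is exactly the intended argument: the paper offers no proof at all (it simply declares the corollary ``clear'' after Theorem~7), and iterating property~(4) while peeling off one factor $f_{i_j}$ of the composition at each step, with the dimension bookkeeping $m_{j-1}=f_{i_j}(m_j)$, is precisely what makes it clear.
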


\begin{remark}
	Given a sequence $(f_i)$ as in the theorem [7] and $P\in P(^nX)$
	we associate the sequence $(k_j)$ defined by
	
	$k_1=1$
	
	$k_2$=$(f_{n-1}\circ f_{n-2}\circ \ldots f_1)(2)$
	
	$k_3$=$(\stack{\del{n}{n-1}{3}}{\f{f_{n-1}}}\circ
	\stack{\del{n}{n-2}{3}}{\f{f_{n-2}}}\circ \ldots \circ
	\stack{\del{n}{1}{3}}{\f{f_1}})(2)$
	
	\vdots
	
	$k_j$=$(\stack{\del{n}{n-1}{j}}{\f{f_{n-1}}}\circ
	\stack{\del{n}{n-2}{j}}{\f{f_{n-2}}}\circ \ldots \circ
	\stack{\del{n}{1}{j}}{\f{f_1}})(2)$
\end{remark}

Now it is simple to obtain the analogous to lemma [1] for
n-homogeneous polynomials, that is, it is simple to prove the
following result.
\begin{theorem}Let $X$ be a complex normed space and $P:X \rightarrow\mathbb{C}$ be a n-homogeneous polynomial and $A$
	be its associated symmetric n-linear form. Let $(f_i)$ be a
	sequence as in the theorem [7] and $(k_j)$ the sequence associated
	to a sequence $(f_i)$ as in the remark [10]. Then if $(e_i)$ is a
	linearly independent sequence in $X$ there is a sequence $(x_j)$
	such that
	
	$$A(x_{i_1},x_{i_2},\ldots,x_{i_n})=0\,\,\mbox{for every}\,\,
	i_1,i_2,\ldots,i_n \in \n$$ and
	
	$x_1\in [e_1,e_2]$

	$x_2\in [e_3,e_4,\ldots,e_{3+k_2-1}]$
	
	$x_3\in [e_{3+k_2},\ldots,e_{3+k_2+k_3-1}]$
	
	$\vdots$
	
	$x_j\in [e_{3+k_2+k_3+\ldots+k_{j-1}},\ldots,e_{3+k_2+k_3+k_j-1}]$
	for every $j\geq 2$ in $\n$.
\end{theorem}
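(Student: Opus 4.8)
The plan is to build the sequence $(x_j)$ one term at a time, exactly mirroring the single-polynomial construction in the proof of Theorem 7 but now working inside the infinite-dimensional span of the $(e_i)$ and keeping track of the ``budget'' of dimensions consumed at each stage. First I would recall the blocks of basis vectors: set $B_1 = [e_1,e_2]$ and, for $j \ge 2$, $B_j = [e_{3+k_2+\cdots+k_{j-1}}, \ldots, e_{3+k_2+\cdots+k_j-1}]$, so that $\dim B_1 = 2$ and $\dim B_j = k_j$ for $j\ge 2$, and the blocks are pairwise independent. The goal is to produce $x_j \in B_j$ with $A(x_{i_1},\ldots,x_{i_n}) = 0$ for all choices of indices.

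The heart of the argument is an induction on $j$. Suppose $x_1,\ldots,x_{j-1}$ have already been chosen so that $A$ vanishes on every $n$-tuple drawn from $\{x_1,\ldots,x_{j-1}\}$; I want to choose $x_j \in B_j$ so that $A$ also vanishes on every $n$-tuple that involves $x_j$ together with earlier $x$'s. Expanding $A(y_1,\ldots,y_n)$ where some entries are $x_j$ and the rest lie in $[x_1,\ldots,x_{j-1}]$, by multilinearity and symmetry the only new conditions are that the $(n-t)$-homogeneous polynomials $z \mapsto A(x_{i_1}^{a_1},\ldots,x_{i_{j-1}}^{a_{j-1}}, z^{n-t})$ vanish at $z = x_j$ for each multi-index $(a_1,\ldots,a_{j-1}) \in \mathbb{N}_0^{j-1}$ with $\sum a_\ell = t$, $1 \le t \le n$ (and the purely-new term $P(x_j)=0$ corresponds to $t = n$). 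These are finitely many polynomials of degrees $n-1, n-2, \ldots, 1$ on the space $B_j$; the number of them of each degree $t$ is at most $\del{n}{t}{j}$ — this is precisely why $\del{n}{t}{j}$ was defined as the count of such multi-indices. Now I apply the ``knock-down'' procedure from Theorem 7 and Corollary 8: since $\dim B_j = k_j = (\stack{\del{n}{n-1}{j}}{\f{f_{n-1}}} \circ \cdots \circ \stack{\del{n}{1}{j}}{\f{f_1}})(2)$, repeatedly passing to zero-subspaces — first killing the $\del{n}{1}{j}$ many $(n-1)$-homogeneous polynomials one at a time via $f_{n-1}$, then the $(n-2)$-homogeneous ones via $f_{n-2}$, and so on down to the linear functionals via $f_1$ — leaves a $2$-dimensional subspace $H \subset B_j$ on which all those polynomials vanish identically. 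Finally, by the elementary observation at the start of the paper (a $2$-homogeneous — indeed any homogeneous — polynomial on a $2$-dimensional complex space has a nontrivial zero), pick $x_j \in H$, $x_j \ne 0$, with $P(x_j) = 0$. Then $x_j$ satisfies all the required vanishing conditions and the induction continues.

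The bookkeeping is the main obstacle, and it has two parts. The first is making the claim ``the new conditions on $x_j$ are exactly these $\le \del{n}{t}{j}$ polynomials of degree $n-t$'' precise: one has to expand $A\big(\sum_\ell c_\ell^{(1)} x_{\sigma(1)} + \cdots\big)$ carefully, collect terms by how many slots are occupied by $x_j$, and observe that a term with $t$ copies of basis vectors among $x_1,\ldots,x_{j-1}$ and $n-t$ copies of $x_j$ is governed by the polynomial attached to the corresponding multi-index; the already-chosen vanishing (the inductive hypothesis) disposes of every term with no $x_j$. The second is checking the dimension count threads correctly through the nested compositions — that after eliminating all degree-$(n-1)$ polynomials we still have $\del{n}{n-2}{j}$-fold room in $f_{n-2}$, etc. — which is exactly the content of properties (1)–(3) of Theorem 7 together with the definition of the $k_j$, so it reduces to invoking Corollary 8 rather than re-deriving anything. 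Once those two points are in place the rest is a direct transcription of the Theorem 7 argument, applied blockwise, and the statement follows.
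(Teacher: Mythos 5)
Your proposal is correct and follows essentially the same route as the paper's proof: induction on $j$, reduction of the new vanishing conditions to the finite family of $p$-homogeneous polynomials $y\mapsto A(x_1^{i_1},\ldots,x_{j-1}^{i_{j-1}},y^{p})$ with $i_1+\cdots+i_{j-1}=n-p$, an appeal to the corollary on simultaneous zero subspaces to extract a $2$-dimensional $H$ inside the $k_j$-dimensional block, and finally a nonzero root of $P$ in $H$. The only blemishes are two transposed labels (the $(n-1)$-homogeneous polynomials number $\delta_{n,n-1,j}$, not $\delta_{n,1,j}$, and $P(x_j)=0$ is the case $t=0$, not $t=n$), neither of which affects the argument.
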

\begin{proof} We obtain $(x_j)$ by induction. We start choosing
	$x_1 \in [e_1,e_2]$ such that $P(x_1)=0$. Suppose that we have
	obtained $x_1,x_2,\ldots,x_{j-1}$ and we will show how to get
	$x_j$. Let $M$ be the subspace given by
	$M=[x_1,x_2,\ldots,x_{j-1}]$ and $N$ be the subspace given by
	$N=[e_{3+k_2+k_3+\ldots+k_{j-1}},\ldots,e_{3+k_2+k_3+k_j-1}]$. For
	each $(i_1,i_2,\ldots,i_{j-1}) \in {\n _0}^{j-1}$ and $p \in
	\{1,2,\ldots,n-1\}$ with $i_1+\ldots+i_{j-1}=n-p$ we take the
	p-homogeneous polynomial $P_{i_1,\ldots,i_{j-1}}$ given by
	$$P_{i_1,\ldots,i_{j-1}}(y)=A(x_1^{i_1},\ldots,x_{j-1}^{i_{j-1}})$$
	Note that fixed $p \in \{1,2,\ldots,n-1\}$ we have exactly
	$\del{n}{p}{j}$
	p-homogeneous polynomials as above and that $N$ is a
	$k_j$-dimensional subspace with
	$k_j=\stack{\del{n}{n-1}{j}}{\f{f_{n-1}}}\circ
	\stack{\del{n}{n-2}{j}}{\f{f_{n-2}}}\circ \ldots \circ
	\stack{\del{n}{1}{j}}{\f{f_1}})(2)$. From corollary [9] we have a
	2-dimensional subspace $H\subset N$ such that
	$$P_{i_1,\ldots,i_{j-1}}|_H=0$$ for every $(i_1,i_2,\ldots,i_{j-1}) \in {\n _0}^{j-1}$ and $p \in
	\{1,2,\ldots,n-1\}$ with $i_1+\ldots+i_{j-1}=n-p$. Now we take
	$x_j \in H$ such that $x_j\ne 0$ and $P(x_j)=0$ and the result
	follows.
\end{proof}

Now the proofs of theorems [12] and [13] are analogous to the
proofs of theorems [2] and [6] respectively.

\begin{theorem}Let $X$ be a normed complex space containing a subspace $M$
	isomorphic to $c_0$. If $P:X \rightarrow\mathbb{C}$ is a
	n-homogeneous polynomial, then there is a subspace $M_P$ such that
	$P|_{M_P}=0$ and $M_P$ is isomorphic to $c_0$.
\end{theorem}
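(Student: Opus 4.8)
The plan is to mimic the proof of Theorem 2, which handled the case $n=2$, but now using the $n$-homogeneous machinery developed in Theorems 7 and 11. The starting point: since $M \subset X$ is isomorphic to $c_0$, it carries a linearly independent sequence $(e_i)$ that spans a dense subspace of (an isomorphic copy of) $c_0$ inside $M$. Applying Theorem 11 to $X$ with this sequence $(e_i)$ and the $n$-homogeneous polynomial $P$ (with associated symmetric $n$-linear form $A$), we obtain a sequence $(x_j)$ with $A(x_{i_1}, \ldots, x_{i_n}) = 0$ for all indices, so in particular $P$ vanishes on $\mathrm{span}\{x_j\}$, and moreover each $x_j$ lies in the span of a consecutive block of the $e_i$'s, the blocks being disjoint.

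The next step is to show that $N := \overline{\mathrm{span}}\{x_j\}$ is isomorphic to $c_0$. This is where I expect the main work. Because the blocks $[e_{3+k_2+\cdots+k_{j-1}}, \ldots, e_{3+k_2+\cdots+k_j-1}]$ are pairwise disjoint, the vectors $x_j$ are "successive" with respect to the $c_0$-basis structure inside $M$; a successive sequence of normalized (or norm-bounded-below) disjointly supported vectors in $c_0$ is equivalent to the unit vector basis of $c_0$, so $\overline{\mathrm{span}}\{x_j\}$ is isomorphic to $c_0$ and complemented-type estimates hold. Concretely, after normalizing so that $\|x_j\| = 1$, one checks that for scalars $(\lambda_j) \in c_0$ one has $\sup_j |\lambda_j| \lesssim \|\sum_j \lambda_j x_j\| \lesssim \sup_j |\lambda_j|$, using the block-disjoint support in the $c_0$-copy; this is the analogue, for $c_0$, of the standard perturbation/successive-block argument. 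The mild subtlety is that Theorem 11 only guarantees $x_j$ lies in a finite-dimensional block, not that it is disjointly supported in the literal $c_0$-coordinates — but since the blocks of $e_i$'s are disjoint and $(e_i)$ is (equivalent to) the $c_0$-basis, the resulting $x_j$ are equivalent to a successive block basis, hence span a copy of $c_0$.

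Finally, set $M_P := N$. By construction $P|_{M_P} = 0$ (indeed $A$ vanishes on all $n$-tuples from $\{x_j\}$, and by density and continuity $P$ vanishes on $\overline{\mathrm{span}}\{x_j\}$), and by the previous paragraph $M_P$ is isomorphic to $c_0$. This completes the proof. I would remark that the only genuinely new ingredient beyond Theorem 11 is the block-basis observation identifying $N$ as a copy of $c_0$; everything else is bookkeeping parallel to Theorem 2. One should double-check that the $x_j$ can be taken nonzero and that normalizing them does not disturb the vanishing of $A$ (it does not, since $A$ is multilinear and the vanishing is on every $n$-tuple).
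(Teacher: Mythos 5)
Your proposal is correct and follows exactly the route the paper intends (the paper only sketches this proof as "analogous" to the earlier results): apply Theorem 11 to a sequence $(e_i)$ equivalent to the unit vector basis of $c_0$ inside $M$, observe that $A$ vanishing on all $n$-tuples of the $x_j$ forces $P=0$ on $\overline{\mathrm{span}}\{x_j\}$, and identify that closed span as a copy of $c_0$ because the $x_j$ form a semi-normalized block basis of the $c_0$-basis. The block-basis observation you single out is indeed the only ingredient beyond Theorem 11, and it is standard.
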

\begin{theorem}
	Let $P:l_\infty \rightarrow \mathbb{C}$ be a n-homogeneous
	polynomial. Then there is a subspace $H\subset l_\infty$ such that
	$P|_H=0$ and $H$ is isomorphic to $l_\infty$. In particular $H$ is
	a non-separable space.
\end{theorem}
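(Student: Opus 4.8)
The plan is to follow the proof of the $2$-homogeneous statement on $l_\infty$ (Theorem [6]), replacing each $2$-homogeneous tool by the $n$-homogeneous one established in the preceding pages, and then adding one more step to pass from a non-separable zero subspace to one isomorphic to $l_\infty$. Since $l_\infty$ contains an isometric copy of $c_0$, I would first apply the theorem that an $n$-homogeneous polynomial on a complex normed space containing $c_0$ is identically zero on a copy of $c_0$ (Theorem [12]) to obtain $M_P\subseteq l_\infty$ with $M_P\cong c_0$ and $P|_{M_P}=0$; this plays the role of ``by Theorem [2] there is an infinite-dimensional $M\subseteq c_0$ with $P|_M=0$, and $M$ contains a copy of $c_0$'' in the $2$-homogeneous proof. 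Since $M_P$ is a subspace of $l_\infty$ isomorphic to $c_0$, the theorem on copies of $c_0$ inside $l_\infty$ then gives $N\subseteq M_P$, a subspace $H\subseteq l_\infty$ with $N\subseteq H$, an isomorphism $T:l_\infty\to H$ restricting to an isomorphism $c_0\to N$, with $H\cong l_\infty$ and $H/N$ having the NIP; note $P|_N=0$ because $N\subseteq M_P$.

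Next I would transport the situation to $l_\infty$ via $Q:=P\circ T$, an $n$-homogeneous polynomial with $Q|_{c_0}=0$, recalling that $l_\infty/c_0$ has the NIP. Let $A$ be the symmetric $n$-linear form of $Q$ and let $\mathcal{M}$ be a $Q$-maximal subspace of $l_\infty$ containing $c_0$. A vector $x$ satisfies $Q|_{\mathcal{M}\oplus[x]}=0$ precisely when $Q(x)=0$ and $A(m_1,\dots,m_j,x^{n-j})=0$ for all $m_1,\dots,m_j\in\mathcal{M}$ and $1\le j\le n-1$. By polarization $A$ vanishes on $\mathcal{M}^n$, so every one of these conditions is a polynomial in $x$ of degree at most $n$ that vanishes on $\mathcal{M}$, in particular on $c_0$. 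If $\mathcal{M}$ were separable these reduce to countably many conditions, and — combining the NIP of $l_\infty/c_0$ with the finite-dimensional simultaneous zero-set corollary (Corollary [9], where the counts $\del{n}{j}{k}$ enter) applied to the constraints one degree at a time — one should be able to produce $x\notin\mathcal{M}$ satisfying all of them, contradicting maximality. Thus $\mathcal{M}$ is non-separable; one then has to upgrade this to a subspace isomorphic to $l_\infty$ on which $Q$ vanishes, by running the block-basis construction of the theorem on copies of $c_0$ inside $l_\infty$ through $\mathcal{M}$, and transport the result back by $T$ to finish.

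I expect the main obstacle to be the replacement of Lemma [19], which is what made the $2$-homogeneous proof so short. For $n=2$ the conditions cutting out a subspace inside the zero set of the polynomial are linear, so they describe an intersection of kernels and the NIP applies verbatim; for $n>2$ they are polynomial, of degrees $1,\dots,n-1$ together with $Q(x)=0$, and the NIP alone controls none of them. The remedy I would use is the mechanism already behind the $n$-homogeneous $c_0$-theorems: polarization first forces every one of these conditions to vanish on the copy of $c_0$, and then the finite-dimensional zero-set machinery lets the NIP of $l_\infty/c_0$ be passed through the (now countably many) polynomial constraints, one degree at a time, to locate a solution off any prescribed separable subspace. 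A second point, genuinely new relative to Theorem [6] — whose conclusion only asserts the existence of a non-separable $P$-maximal subspace — is that one must still refine that non-separable zero subspace into a copy of $l_\infty$; doing so while keeping $Q$ identically zero is the delicate bookkeeping that would have to be carried out in detail.
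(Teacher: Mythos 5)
Your route diverges from the one the paper intends, and the two difficulties you flag yourself are genuine gaps that your outline does not close. The intended proof is the direct construction, not the maximal-subspace argument of Theorem 20: apply Theorem 25 to the unit vector basis $(e_i)$ of $c_0\subset l_\infty$ to obtain a block sequence $(x_j)$ whose associated symmetric form satisfies $A(x_{i_1},\ldots,x_{i_n})=0$ for \emph{all} indices, feed $(x_j)$ into Lemma 17 to extract a subsequence $(x_{j_k})$ for which $H=\{\sum_k \lambda_k x_{j_k}:(\lambda_k)\in l_\infty\}$ is isomorphic to $l_\infty$, and check that $P$ vanishes on $H$ because $A$ vanishes on every $n$-tuple drawn from the sequence (the only delicate point being the passage from finite linear combinations to general elements of $H$). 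No $P$-maximal subspace and no NIP occur anywhere; the entire purpose of Theorem 25 is to make this construction available for $n\geq 3$.

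The two gaps in your version are these. First, you rightly note that Lemma 19 is what drives the $n=2$ argument and that it fails for $n\geq 3$ because the conditions $A(m_1,\ldots,m_j,x^{n-j})=0$ are polynomial rather than linear in $x$; but ``combining the NIP with the simultaneous zero-set corollary one degree at a time'' is not an argument. That corollary operates in finite-dimensional spaces with finitely many polynomials, whereas you need a common zero, outside a prescribed separable subspace, of countably many polynomials of degrees $1$ through $n$ on a non-separable space --- a statement of essentially the same strength as the theorem you are proving, and nothing in the paper supplies it. Second, even granting a non-separable $Q$-maximal subspace $\mathcal{M}$, this does not yield a copy of $l_\infty$ inside the zero set: a non-separable subspace of $l_\infty$ need not contain $l_\infty$ (subspaces isomorphic to $c_0(\Gamma)$, for instance, do not), and the block construction of Theorem 18 starts from a copy of $c_0$ and places its copy of $l_\infty$ in the ambient space, not inside a prescribed non-separable subspace. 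So the final ``upgrade'' cannot be dismissed as bookkeeping; in the intended proof it never arises, because the copy of $l_\infty$ is built first and only then is the vanishing of $P$ on it verified.
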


\end{document}